\documentclass[reqno]{amsart}
\pdfoutput=1
\usepackage[T1]{fontenc}
\usepackage{tikz}
\usepackage{microtype}
\usepackage{lmodern}

\usepackage{enumitem}




\newcommand{\be}{\begin{equation}}
\newcommand{\ee}{\end{equation}}
\newcommand{\bea}{\begin{eqnarray}}
\newcommand{\eea}{\end{eqnarray}}
\newcommand{\bean}{\begin{eqnarray*}} 
\newcommand{\eean}{\end{eqnarray*}}

\newcommand{\brray}{\begin{array}}
\newcommand{\erray}{\end{array}}
\newcommand{\ben}{\begin{equation}{nonumber}}
\newcommand{\een}{\end{equation}{nonumber}}

\numberwithin{equation}{section}
\newtheorem{dfn}{Definition}[section]
\newtheorem{thm}[dfn]{Theorem}
\newtheorem{lmma}[dfn]{Lemma}
\newtheorem{ppsn}[dfn]{Proposition}
\newtheorem{crlre}[dfn]{Corollary}
\newtheorem{xmpl}[dfn]{Example}
\newtheorem{rmrk}[dfn]{Remark}
\newtheorem{rmrks}[dfn]{Remarks}
\newcommand{\bdfn}{\begin{dfn}}
\newcommand{\bthm}{\begin{thm}}
\newcommand{\blmma}{\begin{lmma}}
\newcommand{\bppsn}{\begin{ppsn}}
\newcommand{\bcrlre}{\begin{crlre}}
\newcommand{\bxmpl}{\begin{xmpl}}
\newcommand{\brmrk}{\begin{rmrk}}
\newcommand{\brmrks}{\begin{rmrks}}
\newcommand{\edfn}{\end{dfn}}
\newcommand{\ethm}{\end{thm}}
\newcommand{\elmma}{\end{lmma}}
\newcommand{\eppsn}{\end{ppsn}}
\newcommand{\ecrlre}{\end{crlre}}
\newcommand{\exmpl}{\end{xmpl}}
\newcommand{\ermrk}{\end{rmrk}}
\newcommand{\ermrks}{\end{rmrks}}


\newcommand{\IC}{\mathbb{C}}




\newcommand{\IT}{\mathbb{T}}

\foreach \x in {A,...,Z}{\expandafter\xdef\csname \x\endcsname{\noexpand\mathcal{\x}}} 

\foreach \x in {A,...,Z}{\expandafter\xdef\csname frak\x\endcsname{\noexpand\mathfrak{\x}}} 



\newcommand{\cal}{\mathcal}
\newcommand{\cla}{{\cal A}}
\newcommand{\clb}{{\cal B}}
\newcommand{\clc}{{\cal C}}
\newcommand{\cld}{{\cal D}}
\newcommand{\cle}{{\cal E}}
\newcommand{\clf}{{\cal F}}

\newcommand{\clh}{{\cal H}}
\newcommand{\cli}{{\cal I}}
\newcommand{\clj}{{\cal J}}

\newcommand{\cln}{{\cal N}}
\newcommand{\clo}{{\cal O}}

\newcommand{\clq}{{\cal Q}}

\newcommand{\clt}{{\cal T}}

\newcommand{\clv}{{\cal V}}
\newcommand{\clw}{{\cal W}}

\newcommand{\clz}{{\cal Z}}

\def\a*{{\cal A}_{h,*}}
\def\B{{\cal B}(h)}
\def\B1{{\cal B}_1(h)}
\def\b{{\cal B}^{\rm s.a.}(h)}
\def\b1{{\cal B}^{\rm s.a.}_1(h)}

\newcommand{\ot}{\otimes}

\newcommand{\raro}{\rightarrow}

\newcommand{\id}{\mbox{id}}

\newcommand{\flip}{{\rm flip}}
\newcommand{\tensora}{\otimes_{\A}}
\def \qed {$\Box$}
\mathchardef\mhyphen="2D 

\newcommand{\midarrow}{\tikz \draw[-triangle 90] (0,0) -- +(.1,0);}

\def\a*{{\cal A}_{h,*}}
\def\B{{\cal B}(h)}
\def\B1{{\cal B}_1(h)}
\def\b{{\cal B}^{\rm s.a.}(h)}
\def\b1{{\cal B}^{\rm s.a.}_1(h)}

\mathchardef\mhyphen="2D 




\begin{document}

	
	
\title{Levi-Civita connections from toral actions}
\author[Bhattacharjee]{Suvrajit Bhattacharjee}
\address{Mathematical Institute of Charles University, Sokolovsk\'a 83, Prague, Czech Republic}
\email{bhattacharjee@karlin.mff.cuni.cz}
\author[Joardar]{Soumalya Joardar}\address{Department of Mathematics and Statistics\\ IISER Kolkata, Mohanpur-741246, West Bengal\\
	India}
\email{soumalya@iiserkol.ac.in}
\author[Mukhopadhyay]{Sugato Mukhopadhyay}\address{Institute of Mathematics of the Polish Academy of Sciences\\ ul. \'Sniadeckich 8\\ 00-656 Warszawa\\ Poland}
\email{m.xugato@gmail.com\\ smukhopadhyay@impan.pl}



\begin{abstract}
We construct tame differential calculi coming from toral actions on a class of $\textup{C}^*$\nobreakdash-algebras. Relying on the existence of a unique Levi-Civita connection on such a calculus, we prove a version of the Bianchi identity. A Gauss-Bonnet theorem for the canonical tame calculus of rank two is studied.
\end{abstract}

\subjclass{46L87, 16D20, 53B20.}

\keywords{Levi-Civita connection, Bianchi identity, Gauss-Bonnet theorem, Tame calculus.}

\maketitle

\section{Introduction}
The notions of connection and curvature are important in any form of geometry. In the realm of noncommutative geometry, quite a few notions of connections and curvatures have been introduced. In the setup of noncommutative geometry due to A. Connes, curvature and metric are obtained directly by looking at the asymptotic expansion of a suitable Laplacian. In more algebraic setups (see \cite{Arn, Majid, moscovici, Breggs, peterka, khalkhali1, koszul}) connections are defined on the bimodules of one-forms (also see \cite{rosenberg} where connections are defined on the space of vector fields). Metric is also defined suitably. Then the main challenge is to prove the existence and uniqueness of a Levi-Civita connection compatible with a suitable class of metric.

In recent years, a particular class of differential calculus over complex algebras has been introduced and studied (see \cite{article1, Bhowmick_gos_joardar, conformal_bhowmick}). Also a notion of pseudo-Riemannian metric has been given over such differential calculus called the tame calculus. The main virtue of such a class is that they admit a unique Levi-Civita connection corresponding to each of a large class of pseudo-Riemannian metrics called strongly $\sigma$\nobreakdash-compatible metrics (for definition of such a metric and the Levi-Civita connection see \cite{Bhowmick_gos_joardar}). Once the existence and uniqueness of such a connection is established, quite naturally various geometric quantities like scalar curvature, Ricci tensor, etc. have been subsequently defined and studied for large class of examples. In this paper we prove the existence of such a tame calculus over a class of $\textup{C}^*$\nobreakdash-algebras where the tame calculus is built quite naturally from toral actions over such $\textup{C}^*$\nobreakdash-algebras. Examples cover noncommutative $n$\nobreakdash-torus, group $\textup{C}^*$\nobreakdash-algebras over finitely generated groups and Cuntz algebra. The tame calculus over group $\textup{C}^*$\nobreakdash-algebra is shown to be bicovariant with respect to its natural coproduct. In all the examples the bimodule of one\nobreakdash-forms has been a free module with the exterior derivative killing the basis elements. These examples also admit a canonical bilinear metric denoted by $g_{0}$. Such a tame calculus automatically admits a unique Levi-Civita connection corresponding to any strongly $\sigma$\nobreakdash-compatible metric. We prove a version of the Bianchi identity which holds for all the examples covered in this paper.

In the penultimate section we formulate a Gauss-Bonnet theorem for the canonical calculus of rank two obtained in this paper on $\textup{C}^*$\nobreakdash-algebras admitting a $\mathbb{T}^{2}$\nobreakdash-action in a suitable sense. The formulation is in the spirit of A. Connes and P. Tretkoff (see \cite{tretkof}). There the version of Gauss-Bonnet theorem for noncommutative torus is in the set up of spectral triples, where the integrated scalar curvature of the noncommutative torus is obtained directly from the spectral data. As mentioned earlier the metric information is also encoded in the spectral data. The proposed version is the following:
\begin{align*}
&\textit{The integrated scalar curvature of the noncommutative two-torus is independent}\\
&\textit{of a deformation parameter.}
\end{align*}

A number of works followed then (see \cite{Ponge, khalkhali2, Sitarz, Khalkhali}). The analogous statement of A. Connes' proposed Gauss-Bonnet theorem in the tame calculus set up would be that the integrated scalar curvature of the canonical calculus of rank two is independent of the positive deformation parameter $k$ of conformally deformed metric $kg_{0}$, where $g_{0}$ is the canonical bilinear metric on the tame calculus. It is shown that if one assumes the Gauss-Bonnet theorem in the presence of a tracial state the integrated scalar curvature is forced to be zero for all smooth deformation parameters (see point (3) of Remark \ref{wedge_extension} for the definition of a smooth deformation parameter in our set up). Then the Gauss-Bonnet theorem is proved for the calculus of rank two of noncommutative $2$\nobreakdash-torus and the group $\textup{C}^*$\nobreakdash-algebra for free group on two generators. The proof is essentially due to Rosenberg (\cite{rosenberg}). Then we go beyond the conformally deformed metric and consider a new one parameter class of strongly $\sigma$\nobreakdash-compatible metrics and show that the analogous Gauss-Bonnet theorem fails for noncommutative $2$\nobreakdash-torus rendering the conformally deformed metrics a special place from the point of view of the Gauss-Bonnet theorem. We end the paper by discussing some future directions. In summary let us mention the new contributions of this paper in this widely studied area:
  \begin{itemize}
  	\item In Theorem \ref{main}, the construction of the tame calculus on the noncommutative $2$\nobreakdash-torus is generalized to produce tame calculi on a reasonable class of $\textup{C}^*$\nobreakdash-algebras. The class in particular includes the Cuntz algebras. In this paper, the main tool to construct such a tame calculus of rank $n$ on the Cuntz algebra $\mathcal{O}_n$ is the action of the torus $\mathbb{T}^n$. In \cite{Cuntz}, for $\mathcal{O}_3$, instead of the action of $\mathbb{T}^3$, a canonical action of $\textup{SO}(3)$ was considered. Since a centered bimodule, generated freely by $n$\nobreakdash-elements, over a fixed algebra is unique up to bimodule isomorphisms, for $n = 3$ the bimodule of one-forms of rank three constructed in the present paper is isomorphic to the one in [16]. However, the respective exterior derivatives are different, which lead to non-equivalent differential calculi. Also, in the present article the Christoffel symbols are all zero for the Levi-Civita connection corresponding to the canonical bilinear metric and consequently, the scalar curvature is zero too for $\clo_{n}$ for all $n$, whereas the scalar curvature obtained for $\clo_{3}$ in \cite{Cuntz} is $-\frac{3}{4}$ (see Theorem 4.7 of \cite{Cuntz}). 
  	\item A version of the Bianchi identity (Equation \eqref{Bianchiidentity}) has been deduced.
  	\item Assuming that the Gauss-Bonnet theorem holds for a tame calculus of rank two on a $\textup{C}^*$\nobreakdash-algebra with a {\bf tracial} state, in Lemma \ref{13jul21sm1}, the integrated scalar curvature is shown to be necessarily zero for all smooth deformation parameters.
  	\item In Corollary \ref{13jul21sm2}, the Gauss-Bonnet theorem is shown to hold for the group $\textup{C}^*$\nobreakdash-algebra of the free group with two generators (extending the noncommutative $2$\nobreakdash-torus example).
  	\item In Subsection \ref{13jul21sm3}, a new one parameter class of strongly $\sigma$\nobreakdash-compatible metrics on noncommutative $2$\nobreakdash-torus is presented for which the Gauss-Bonnet theorem fails.
  	\end{itemize}     

	  \subsection*{Acknowledgments} The first author is supported by the Charles University PRIMUS grant \textit{Spectral Noncommutative Geometry of Quantum Flag Manifolds} PRIMUS/21/SCI/026. This work started while the first author was visiting the second author in February, 2020. He wishes to thank the second author for the invitation and kind hospitality. He is also grateful to Indian Statistical Institute, Kolkata and Prof. Debashish Goswami for offering him a visiting scientist position, where this work was written up. The second author thanks the Department of Science and Technology, India (DST/INSPIRE/04/2016/002469). The third author was partially supported by the National Science Center of Poland (NCN) grant no. 2020/39/I/ST1/01566. All the authors would like to thank Jyotishman Bhowmick for many useful discussions. We are also grateful to the referee for her/his careful reading and suggestions for improvements.

\section{Preliminaries}

\subsection{Tame calculus} In this subsection, we recall the definition of a tame differential calculus on an algebra $\cla$ and state the theorem guaranteeing existence of a unique Levi-Civita connection. We remark that although we shall consider $\textup{C}^*$\nobreakdash-algebras ($\ast$\nobreakdash-algebra in general), the $\ast$\nobreakdash-structure does not have any particular role. In particular, we shall not consider $\ast$\nobreakdash-compatibility of metric and connections. Moreover, in case of $\textup{C}^*$\nobreakdash-algebras, the tame calculus will be constructed on a canonical dense $\ast$\nobreakdash-subalgebra which we shall not mention explicitly.

\bdfn \label{diffcal}
A differential calculus on a $\mathbb{C}$\nobreakdash-algebra $\A$ is a differential graded algebra $ (  \Omega ( \A ), \wedge, d  ) $ generated \footnote{as a differential graded algebra} by elements of degree zero. 
\edfn

In this paper, the bimodule of one-forms of a generic differential calculus will be denoted by $\Omega^{1}(\cla).$ Now we recall the notion of a quasi-tame calculus as introduced in \cite{Bhowmick_gos_joardar}. 

\bdfn \cite[Definition 2.11]{article1}\label{quasitame}
A differential calculus $ ( \Omega ( \cla ), \wedge, d )$ is said to be quasi-tame if the following conditions hold:
\begin{enumerate}
	\item The bimodule $\Omega^{1}(\cla)$ is finitely generated and projective as a right $\cla$ module.
	\item The following short exact sequence of right $\cla$\nobreakdash-modules splits:
	$$ 0 \rightarrow {\rm Ker} ( \wedge ) \rightarrow \Omega^{1}(\cla) \ot_{\cla} \Omega^{1}(\cla) \rightarrow \Omega^2 ( \cla ) \rightarrow 0. $$
	In particular, there exists a right $\cla$\nobreakdash-module $\clf$ isomorphic to the bimodule of two\nobreakdash-forms $\Omega^{2}(\cla)$ such that:
	\begin{equation} \label{splitting25thmay2018}
	\Omega^{1}(\cla) \ot_{\cla} \Omega^{1}(\cla) = {\rm Ker}(\wedge) \oplus \clf 
	\end{equation}
	\item The idempotent $P_{\rm sym} \in {\rm Hom}_{\cla}(\Omega^{1}(\cla) \ot_{\cla} \Omega^{1}(\cla), \Omega^{1}(\cla) \ot_{\cla} \Omega^{1}(\cla))$ with range  ${\rm Ker}(\wedge)$ and  kernel $\clf$ is an $\cla$\nobreakdash-bimodule map. 
\end{enumerate}
\edfn

We denote the map $2P_{\rm sym}-1$ by $\sigma$. We recall the definition of a pseudo-Riemannian metric on a quasi-tame differential calculus.

\bdfn [\cite{article1,Bhowmick_gos_joardar}]\label{metricdefn}
Let $ ( \Omega ( \cla ), \wedge, d )$ be a quasi-tame differential calculus. A pseudo-Riemannian metric $ g $ on $ \Omega^{1}(\cla) $ is
an element of $ {\rm Hom}_{\cla} ( \Omega^{1}(\cla) \ot_{\cla} \Omega^{1}(\cla), \cla ) $ such that
\begin{enumerate}
	\item $g$ is symmetric, i.e., $ g \circ \sigma = g $;
	\item $g$  is non-degenerate, i.e., the right $ \cla$\nobreakdash-linear map $ V_g: \Omega^{1}(\cla) \rightarrow {\Omega^{1}(\cla)}^* $ defined by $ V_g ( \omega ) ( \eta ) = g ( \omega \otimes_{\cla} \eta ) $ is
	an isomorphism of right $ \cla$\nobreakdash-modules, where $\Omega^{1}(\cla)^{\ast}$ stands for the right $\cla$\nobreakdash-module ${\rm Hom}_{\cla}(\Omega^{1}(\cla),\cla)$.
\end{enumerate}
\edfn

We write $\clz(\Omega^{1}(\cla))$ and $\clz(\cla)$ for the center of the bimodule $\Omega^{1}(\cla)$ and the algebra $\cla$, respectively.

\bdfn [\cite{Bhowmick_gos_joardar}]\label{tame}
A quasi-tame differential calculus (Definition \ref{quasitame}) $ (\Omega^{1}(\cla), d ) $ is said to be tame if
\begin{enumerate}
	\item The map $u^{\Omega^{1}(\cla)}:\clz(\Omega^{1}(\cla)) \otimes_{\clz(\cla)} \cla \rightarrow \Omega^{1}(\cla)$ defined by
	$$u^{\Omega^{1}(\cla)}(\sum_i e_i \otimes_{\clz(\cla)} a_i)=\sum_i e_i a_i$$
	is an isomorphism of vector spaces,
	\item $\sigma := 2 P_{\rm sym} - 1$ satisfies the following equation for all $\omega, \eta \in \clz(\Omega^{1}(\cla)):$
	\begin{equation} \label{17thdec20191} \sigma ( \omega \tensora \eta ) = \eta \tensora \omega. \end{equation}
\end{enumerate}
\edfn

Now we recall the definition of a strongly $\sigma$\nobreakdash-compatible pseudo-Riemannian metric on a tame calculus which is equivalent to but slightly different from the original definition. We take the equivalent criterion of Proposition 4.2 of \cite{Bhowmick_gos_joardar} as the definition of a strongly $\sigma$\nobreakdash-compatible metric.

\bdfn\label{sigmacompatible}
A pseudo-Riemannian metric $g$ on a tame differential calculus is said to be strongly $\sigma$\nobreakdash-compatible if any two elements of the set $\{g(\omega\ot\eta):\omega,\eta\in\clz(\Omega^{1}(\cla))\}$ commute. 
\edfn  

\brmrk \label{metricalgbera}
Let $g$ be a strongly $\sigma$\nobreakdash-compatible pseudo-Riemannian metric on a tame differential calculus $(\Omega^{1}(\cla), d)$ where $\Omega^{1}(\cla)$ is a free bimodule over $\A$. Let us denote the matrix $((g(e_i \otimes_{\A} e_j)))_{ij}$ by $G$, where $(e_i)_i$ is a fixed ordered central basis of $\Omega^{1}(\cla)$. Let $\A_G$ denote the algebra generated by the entries of $G$. Since $g$ is strongly $\sigma$\nobreakdash-compatible, $\A_G$ is a commutative subalgebra of $\A$. Hence it makes sense to talk about $\mathrm{det}(G)$ as an element of $\A_G$ and $\mathrm{adj}(G)$ (the adjugate of $G$) as a matrix of the same order as $G$ and with entries from $\A_G$. Moreover, $G \mathrm{adj}(G) = \mathrm{adj}(G) G = \mathrm{det}(G)$. Note that if $G$ is an invertible element of $M_n(\A)$ then $\det(G)$ is an invertible element in $\A$. Conversely, if $\mathrm{det}(G)$ is an invertible element of $\A_G$, then $G$ in an invertible element of $M_n(\A_G)$ and hence of $M_n(\A)$. Moreover, the entries of $G^{-1}$ being from $\A_G$, they commute with each other and with the entries of $G$. Let us denote $G^{-1}_{ij}$ by $g^{ij}$. Metrics such that $G$ is invertible have been considered in \cite{Ponge}. Note that the definition of strongly $\sigma$\nobreakdash-compatible metrics does not include invertibility of $G$. But for the examples of strongly $\sigma$\nobreakdash-compatible metrics considered in this paper, the matrix $G$ is always invertible and hence the following general results will be applicable for the metrics considered in this paper. We would like to mention that the self-compatible metric in the sense of \cite{Ponge} is strongly $\sigma$\nobreakdash-compatible (a proof can be found in \cite{conformal_bhowmick}). 
\ermrk
In this paper, we shall be concerned with tame differential calculi $(\Omega(\cla),\wedge,d)$ arising from a spectral data (see Subsection \ref{Connes_forms}) such that the module $\Omega^{1}(\cla)$ is free as a bimodule with a central basis $e_{1},\dots,e_{n}$. Note that in such a situation, adapting the proof of Theorem 3.4 of \cite{Cuntz}, it can be shown that the conditions of a tame calculus are satisfied. On such a tame calculus, the following are easily verifiable examples of strongly $\sigma$\nobreakdash-compatible metrics.

\bxmpl
The canonical bilinear metric $g_{0}$: $g_{0}(\sum_{i,j} e_{i}\ot e_{j}a_{ij})=\sum_{i}a_{ii}.$
\exmpl 

\bxmpl
The conformally deformed metric: for an invertible element $k\in\cla$, the conformally deformed metric $kg_{0}(\sum_{i,j} e_{i}\ot e_{j}a_{ij})=\sum_{i}ka_{ii}.$
\exmpl 

\bxmpl\label{new} For an invertible element $k\in\cla$, the metric $g$ given by \[g(\sum_{i,j} e_{i}\ot e_{j}a_{ij})=ka_{11}+\sum_{i=2}^{n}a_{ii}.\]
\exmpl
\bdfn
\label{connexion}
Suppose $(\Omega(\cla),\wedge,d)$ is a differential calculus on $\cla$. A (right) connection on an $\cla$-bimodule $\cle$ is a $\mathbb{C}$-linear map $\nabla:\cle\raro \cle\ot_{\cla}\Omega^{1}(\cla)$ such that\begin{displaymath}
\nabla(ea)=\nabla(e)a+e\ot_{\cla} da.
\end{displaymath} 
The torsion of a connection $\nabla$ on the bimodule $\cle:=\Omega^{1}(\cla)$ is the right $\cla$-linear map $T_{\nabla}:=\wedge \circ \nabla+d:\Omega^{1}(\cla)\raro\Omega^{2}(\cla)$. The connection $\nabla$ is said to be torsionless if $T_{\nabla}=0$.
\edfn 
It is a fact (see Theorem 3.3 of \cite{koszul}) that given a tame calculus there is always a torsionless connection on $\Omega^{1}(\cla)$.
\brmrk
In this paper, we shall only consider right connections on the bimodule $\Omega^{1}(\cla)$ and will do so without mentioning it explicitly from now on.
\ermrk 
\begin{ppsn} \cite[Subsection 4.1]{article1} \label{10jun22sm2}
	For a connection $\nabla$ and a pseudo-Riemannian metric $g$ on $\Omega^1(\A)$, let us define
	\[ \Pi^0_g(\nabla) : \Z(\Omega^1(\A)) \otimes_{\IC} \Z(\Omega^1(\A)) \to \Omega^1(\A) \]
	by the map given by
	\[ \Pi^0_g(\nabla) (\omega \otimes_{\IC} \eta) = (g \tensora \id) \sigma_{23}(\nabla(\omega) \tensora \eta + \nabla(\eta) \tensora \omega). \]
	The map $\Pi^0_g(\nabla)$ is extended to a well-defined $\IC$-linear map
	\[  \Pi_g(\nabla) : \Omega^1(\A) \tensora \Omega^1(\A) \to \Omega^1(\A) \]
	where, for $\omega$ and $\eta$ in $\Z(\Omega^1(\A))$ and $a$ in $\A$, the extension is given by
	\begin{equation} \label{10jun22sm1}
		\Pi_g(\nabla) (\omega \tensora \eta a) = \Pi^0_g (\nabla) (\omega \otimes_{\IC} \eta) a + g(\omega \tensora \eta) d(a).
	\end{equation}
\end{ppsn}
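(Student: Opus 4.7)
The plan is to verify well-definedness of $\Pi^0_g(\nabla)$ first and then of its extension $\Pi_g(\nabla)$. The first step is essentially structural: the map $\omega,\eta\mapsto(g\tensora\id)\sigma_{23}(\nabla(\omega)\tensora\eta+\nabla(\eta)\tensora\omega)$ is $\IC$-bilinear, and manifestly symmetric in its two slots, by construction, since $\nabla$ is $\IC$-linear, $\sigma_{23}$ is well-defined ($\sigma$ being an $\A$-bimodule endomorphism of $\Omega^{1}(\A)\tensora\Omega^{1}(\A)$ by Definition \ref{quasitame}(3)), and $g\tensora\id$ is right $\A$-linear. Hence $\Pi^0_g(\nabla)$ factors through $\Z(\Omega^{1}(\A))\otimes_\IC\Z(\Omega^{1}(\A))$ with values in $\Omega^{1}(\A)$.

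For the extension, I would observe that tameness (Definition \ref{tame}(1)) gives $\Omega^{1}(\A)=\Z(\Omega^{1}(\A))\cdot\A$, so every element of $\Omega^{1}(\A)\tensora\Omega^{1}(\A)$ is a $\IC$-linear combination of elementary tensors $\omega\tensora\eta a$ with $\omega,\eta\in\Z(\Omega^{1}(\A))$ and $a\in\A$: writing $\alpha=\sum_i\omega_i a_i$ and $\beta=\sum_j\eta_j b_j$ and using the centrality of each $\eta_j$ to move $a_i$ across $\tensora$, one gets $\alpha\tensora\beta=\sum_{i,j}\omega_i\tensora\eta_j(a_ib_j)$. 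The candidate assignment $\omega\otimes_\IC\eta\otimes_\IC a\mapsto\Pi^0_g(\nabla)(\omega\otimes_\IC\eta)\,a+g(\omega\tensora\eta)\,d(a)$ is manifestly $\IC$-linear; I need to check it descends along the natural surjection from $\Z(\Omega^{1}(\A))\otimes_\IC\Z(\Omega^{1}(\A))\otimes_\IC\A$ onto $\Omega^{1}(\A)\tensora\Omega^{1}(\A)$, whose kernel is generated by the relations $\omega c\otimes_\IC\eta\otimes_\IC a\sim\omega\otimes_\IC\eta\otimes_\IC ca$ and $\omega\otimes_\IC\eta c\otimes_\IC a\sim\omega\otimes_\IC\eta\otimes_\IC ca$ for $c\in\Z(\A)$.

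The technical heart, which I expect to be the main (though not deep) obstacle, is the Leibniz-type identity
\[\Pi^0_g(\nabla)(\omega c\otimes_\IC\eta)=\Pi^0_g(\nabla)(\omega\otimes_\IC\eta)\,c+g(\omega\tensora\eta)\,d(c),\qquad c\in\Z(\A).\]
To establish it, expand $\nabla(\omega c)=\nabla(\omega)c+\omega\tensora d(c)$ in the defining formula. The contributions from $\nabla(\omega)c\tensora\eta+\nabla(\eta)\tensora\omega c$ collapse to $\Pi^0_g(\nabla)(\omega\otimes_\IC\eta)\,c$ after using the $\A$-bimodule-linearity of $\sigma$ to pull $c$ past $\sigma_{23}$ (together with centrality of $\omega$ and $\eta$ to shuttle it around the middle tensor slot). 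The remaining piece $(g\tensora\id)\sigma_{23}(\omega\tensora d(c)\tensora\eta)$ is handled via the auxiliary identity
\[\sigma(\alpha\tensora\eta)=\eta\tensora\alpha\quad\text{for every }\alpha\in\Omega^{1}(\A)\text{ and }\eta\in\Z(\Omega^{1}(\A)),\]
which follows by writing $\alpha=\sum_i e_i a_i$ in the central basis and invoking equation \eqref{17thdec20191} together with the right $\A$-linearity of $\sigma$ and centrality of $\eta$. Applied with $\alpha=d(c)$, it yields $(g\tensora\id)(\omega\tensora\eta\tensora d(c))=g(\omega\tensora\eta)\,d(c)$, as needed.

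The analogous identity on the second slot follows either from the symmetry $\Pi^0_g(\nabla)(\omega\otimes_\IC\eta)=\Pi^0_g(\nabla)(\eta\otimes_\IC\omega)$ combined with $g\circ\sigma=g$, or by an entirely parallel calculation. With both Leibniz-type identities in hand, verifying each descent relation reduces to expanding $d(ca)=d(c)a+cd(a)$ on one side and substituting the Leibniz identity on the other: both sides produce the same three terms $\Pi^0_g(\nabla)(\omega\otimes_\IC\eta)\,ca$, $g(\omega\tensora\eta)\,d(c)\,a$, and $g(\omega\tensora\eta)\,c\,d(a)$. This delivers the required well-defined $\IC$-linear map $\Pi_g(\nabla):\Omega^{1}(\A)\tensora\Omega^{1}(\A)\to\Omega^{1}(\A)$ extending $\Pi^0_g(\nabla)$ according to \eqref{10jun22sm1}.
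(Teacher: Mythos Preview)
The paper does not supply its own proof of this proposition; it simply records the statement with a citation to \cite[Subsection 4.1]{article1}. So there is no in-paper argument to compare against, and your write-up is filling in what the authors deferred to that reference.

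Your argument is correct in outline and in its computational details. The Leibniz-type identity
\[
\Pi^0_g(\nabla)(\omega c\otimes_\IC\eta)=\Pi^0_g(\nabla)(\omega\otimes_\IC\eta)\,c+g(\omega\tensora\eta)\,d(c),\qquad c\in\Z(\A),
\]
is verified exactly as you describe, and the auxiliary identity $\sigma(\alpha\tensora\eta)=\eta\tensora\alpha$ for $\eta\in\Z(\Omega^1(\A))$ is the right tool (and follows from \eqref{17thdec20191} together with right $\A$-linearity of $\sigma$ and centrality of $\eta$, just as you indicate). The descent verification via the Leibniz rule $d(ca)=d(c)a+c\,d(a)$ is also correct.

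The one point you assert without justification is the description of the kernel of the surjection
\[
\Z(\Omega^{1}(\A))\otimes_\IC\Z(\Omega^{1}(\A))\otimes_\IC\A\twoheadrightarrow\Omega^{1}(\A)\tensora\Omega^{1}(\A)
\]
as generated by the two $\Z(\A)$-balancing relations. This amounts to the isomorphism $\Omega^{1}(\A)\tensora\Omega^{1}(\A)\cong\Z(\Omega^{1}(\A))\otimes_{\Z(\A)}\Z(\Omega^{1}(\A))\otimes_{\Z(\A)}\A$, which is not immediate from Definition~\ref{tame}(1) alone: tameness gives the first-power isomorphism $u^{\Omega^1(\A)}$, but its extension to higher tensor powers is a separate (though standard) fact in the theory of centered bimodules, proved in \cite{article1}. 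In the free situation relevant to this paper (central basis $e_1,\dots,e_n$) it is elementary, since both sides are free on $\{e_i\tensora e_j\}$. For a self-contained proof you should either invoke that result explicitly or restrict to the free case and note that $\{e_i\tensora e_j\}$ is simultaneously an $\A$-basis of $\Omega^1(\A)\tensora\Omega^1(\A)$ and a $\Z(\A)$-basis of $\Z(\Omega^1(\A))\otimes_{\Z(\A)}\Z(\Omega^1(\A))$. With that point supplied, your proof is complete and matches the approach of the cited reference.
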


\begin{dfn}
	We say that a torsionless connection $\nabla$ on $\Omega^1(\A)$ is a Levi-Civita connection compatible with a pseudo-Riemannian metric $g$ if
	\[ \Pi_g(\nabla) = d \circ g . \]
\end{dfn}

The following theorem is the most important aspect of a tame calculus. 
\bthm {\cite[Theorem 4.4]{Bhowmick_gos_joardar}} \label{13jul21sm5}
Let $(\Omega^{1}(\cla),d,\wedge)$ be a tame differential calculus and $g$ be a strongly $\sigma$\nobreakdash-compatible metric. Then in the presence of a pseudo-Riemannian bilinear metric $g_{0}$, there exists a unique Levi-Civita connection for the triple $(\Omega^{1}(\cla),d,g)$.
\ethm
\bdfn
Let $\Omega^{1}(\cla)$ be a free bimodule with a central basis $\{e_{1},\ldots,e_{n}\}$. Then one can define the Christoffel symbols $\Gamma^{i}_{jk}$ of a connection $\nabla$ as follows:
\begin{displaymath}
\nabla(e_{i})=\sum_{j,k}e_{j}\ot e_{k}\Gamma^{i}_{jk}.
\end{displaymath}
\edfn 
We now state and prove a theorem which explicitly determines the Christoffel symbols of the unique Levi-Civita connection obtained in the Theorem \ref{13jul21sm5}.

\begin{thm}
	\label{christoffelgeneral}
	Let $\nabla_g$ be the Levi-Civita connection on the data $(\Omega^{1}(\cla), d, g)$, where $(\Omega^{1}(\cla), d)$ is a tame differential calculus, $\Omega^{1}(\cla)$ is a free bimodule over $\A$ with central basis $\{e_{1},\ldots, e_{n}\}$ and $g$ is a strongly $\sigma$\nobreakdash-compatible metric on $\Omega^{1}(\cla)$ with $G$ invertible. Moreover, let $\nabla_0$ be a torsionless connection, and $\partial_i$ be the derivations such that $d(a)=\sum_{i=1}^{n}\partial_{i}(a)e_{i}$ for all $a$ in $\cla$. Then the Christoffel symbols $\Gamma^i_{jk}$ of $\nabla_g$ are given by
	\begin{equation} \label{10feb21sm15}
	\begin{aligned}
	\Gamma^p_{ml} = & \frac{1}{2}\Big( \sum_j g^{lj} \partial_m (g_{pj}) + \sum_i g^{mi} \partial_l (g_{ip}) -\sum_{ijn} g_{pn} g^{li} g^{mj} \partial_n (g_{ij}) \Big) \\
	& + \frac{1}{2} \Big( (\Gamma_0)^p_{ml} - (\Gamma_0)^p_{lm} \Big) 
	+ \frac{1}{2} \sum_{in} g_{pn}g^{mi} \Big( (\Gamma_0)^i_{ln} - (\Gamma_0)^i_{nl} \Big) \\
	& + \frac{1}{2} \sum_{in} g_{pn} g^{li} \Big( (\Gamma_0)^i_{mn} - (\Gamma_0)^i_{nm} \Big),
	\end{aligned}
	\end{equation}
	where $(\Gamma_{0})^{i}_{jk}$ are the Christoffel symbols for the torsionless connection $\nabla_{0}$.
\end{thm}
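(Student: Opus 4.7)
The plan is to translate the two defining properties of $\nabla_g$ into linear equations on the Christoffel symbols and then solve those equations by the standard Koszul-type cyclic manipulation, while treating the noncommutative bookkeeping with some care. First, since both $\nabla_g$ and $\nabla_0$ are torsionless, the difference $(\nabla_g - \nabla_0)(e_i)$ lies in $\ker(\wedge)$, which by tameness coincides with the $+1$\nobreakdash-eigenspace of $\sigma$. Because $\sigma(e_j \tensora e_k) = e_k \tensora e_j$ on the central basis and $\sigma$ is right $\cla$\nobreakdash-linear, uniqueness of expansions in the free bimodule $\Omega^1(\cla) \tensora \Omega^1(\cla)$ over $\{e_j \tensora e_k\}$ yields the \emph{torsion relation}
\begin{equation*}
\Gamma^i_{jk} - \Gamma^i_{kj} \;=\; (\Gamma_0)^i_{jk} - (\Gamma_0)^i_{kj}.
\end{equation*}

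Next, I would unravel $\Pi_g(\nabla_g) = d \circ g$ on the central element $e_m \tensora e_l$. Since $e_m$ and $e_l$ are central, the extension formula (\ref{10jun22sm1}) of Proposition \ref{10jun22sm2} collapses to just $\Pi^0_g(\nabla_g)(e_m \otimes_{\IC} e_l)$. Expanding $\nabla_g(e_m)$ and $\nabla_g(e_l)$ in Christoffel symbols, observing that $\sigma_{23}$ acts as the plain swap on central factors (by right $\cla$\nobreakdash-linearity), then applying $g \tensora \id$, sliding coefficients past the central basis and comparing the right $\cla$\nobreakdash-coefficients in $\sum_k e_k(\cdot) = d(g_{ml})$, one obtains the \emph{compatibility relation}
\begin{equation*}
\partial_k(g_{ml}) \;=\; \sum_j g_{jl}\,\Gamma^m_{jk} \;+\; \sum_j g_{jm}\,\Gamma^l_{jk}.
\end{equation*}
Now I would run the Koszul manoeuvre: write the compatibility relation at the three cyclic triples $(l,m,p)$, $(m,l,p)$, and $(p,l,m)$, form (first) $+$ (second) $-$ (third), and use the torsion relation to convert every $\Gamma^\bullet_{\bullet \bullet}$ whose derivative slot is not $l$ into one with the desired index order, at the cost of correction terms built from $S^i_{jk} := (\Gamma_0)^i_{jk} - (\Gamma_0)^i_{kj}$. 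Contracting the resulting identity with $g^{mi}$ and $g^{lj}$ and invoking $g \circ \sigma = g$ (which gives $g_{ij} = g_{ji}$) together with $\sum_k g^{ik} g_{kj} = \delta^i_j$, isolates $\Gamma^p_{ml}$ and, after relabeling, assembles into the four groups displayed in (\ref{10feb21sm15}): the three $\partial$\nobreakdash-terms from the Koszul combination, the bare torsion difference $(\Gamma_0)^p_{ml} - (\Gamma_0)^p_{lm}$, and the two $g g^{-1}$\nobreakdash-dressed torsion corrections.

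The hard part will be the noncommutative bookkeeping. Strong $\sigma$\nobreakdash-compatibility, via Remark \ref{metricalgbera}, is crucial: it places all entries of $G$ and $G^{-1}$ in a common commutative subalgebra $\cla_G \subseteq \cla$, so that triple contractions such as $\sum_{i,j,n} g_{pn} g^{li} g^{mj} \partial_n(g_{ij})$ are unambiguous and the Koszul inversion step is meaningful. Centrality of $\{e_1, \ldots, e_n\}$ is equally essential: it is what allows scalars to slide across tensor factors during the unraveling of $\Pi^0_g$ and what reduces $\sigma_{23}$ to a plain swap on the relevant strands. Each individual manipulation is routine, but the careful tracking of left- versus right-multiplication, and of index positions through repeated torsion-swaps, is genuinely delicate. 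Existence of a Levi-Civita connection is granted by Theorem \ref{13jul21sm5}, so no independent verification that the resulting formula defines a bona fide connection is needed.
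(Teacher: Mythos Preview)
Your approach is the same Koszul strategy the paper uses, and your derivations of the torsion relation and of the compatibility relation $\partial_k(g_{ml}) = \sum_j g_{jl}\Gamma^m_{jk} + \sum_j g_{jm}\Gamma^l_{jk}$ are both correct. However, the order of operations you describe---cycling first on the raw compatibility relation, then contracting with $g^{-1}$---runs into a difficulty specific to the index convention here. In this convention the \emph{upper} index of each $\Gamma$ in the compatibility relation is one of the free metric indices $m,l$, not a summed dummy; so when you form the alternating sum over three triples you obtain cross terms such as $\sum_j g_{jl}(\Gamma^p_{jm} - \Gamma^m_{jp})$ with \emph{mismatched upper} indices. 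The torsion relation only swaps the two lower indices of a fixed $\Gamma^i$, so it cannot combine these. The subsequent contractions you propose (``with $g^{mi}$ and $g^{lj}$'') would then be contracting already-free indices and do not isolate $\Gamma^p_{ml}$.

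The paper avoids this by reversing the order. It sets $L=\nabla_g-\nabla_0$, so that $L^i_{jk}=L^i_{kj}$ is genuinely symmetric, and first contracts the compatibility equation with $g^{mj}$ to isolate $L^i_{ml}$; the symmetry then lets it swap the lower indices for free, after which a second contraction with $g^{ni}$ produces an expression $\sum_i g^{ni}L^i_{ml}+\sum_i g^{li}L^i_{nm}$ that now has a clean cyclic structure in $(l,m,n)$. Only at this stage is the $3\times 3$ cyclic system solved, and a final contraction with $g_{pn}$ (plus adding back $(\Gamma_0)^p_{ml}$) yields the displayed formula. Your plan is on the right track, but needs this reordering: contract before you cycle, and work with $L$ rather than $\Gamma$ so that the swap of lower indices is cost-free.
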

\begin{proof}
	Let $(e_i)_i$ be our choice of ordered central basis of $\Omega^{1}(\cla)$. Since the Christoffel symbols of $\nabla_g$ are given by $\Gamma^i_{jk}$, we have that $\nabla_g(e_i) = \sum_{jk} e_j \tensora e_k \Gamma^i_{jk}.$ Similarly, we have $\nabla_{0}(e_{i})=\sum_{jk}e_{j}\ot e_{k}(\Gamma_0)^i_{jk}$. Moreover, let us define a map $L: \Omega^{1}(\cla) \to \Omega^{1}(\cla) \tensora \Omega^{1}(\cla)$ by $L := \nabla_g - \nabla_0$. Since the space of (right) connections on $\Omega^{1}(\cla)$ is an affine space, $L$ is a right $\A$\nobreakdash-linear map. We denote its coefficients by $L^i_{jk}$. Then we have that
	\begin{equation} \label{10feb21sm4}
	L^i_{jk} = \Gamma^i_{jk} - (\Gamma_0)^i_{jk}.
	\end{equation}
	Since $\nabla_g$ and $\nabla_0$ are both torsionless, we have that
	\[ \wedge \circ L = \wedge \circ (\nabla_g - \nabla_0) = -d + d = 0. \]
	Since $\wedge (L(e_i)) = \sum_{j \le k} e_j \wedge e_k (L^i_{jk} - L^i_{kj})$, we have that for all $i, j, k$,
	\begin{equation} \label{10feb21sm5}
	L^i_{jk} = L^i_{kj}.
	\end{equation}
	\indent Recall from \eqref{10jun22sm1}, the definition of $\Pi_g(\nabla)$. Moreoever, define $\Phi_g(L) = \Pi_g(\nabla_g) - \Pi_g(\nabla_0)$. Since $\nabla_g$ is compatible with $g$ we have that
	\begin{equation} \label{10feb21sm6}
	\Phi_g(L) = \Pi_g(\nabla_g) - \Pi_g(\nabla_0) = d \circ g - \Pi_g(\nabla_0).
	\end{equation}
	Applying \eqref{10feb21sm6} on $e_i \tensora e_j$, we get
	\begin{equation} \label{10feb21sm7}
	\begin{aligned}
	&\sum_{kl} (g \tensora \id) \sigma_{23}(e_k \tensora e_l \tensora e_j L^i_{kl} + e_k \tensora e_l \tensora e_i L^j_{kl})\\
	= \ &d(g_{ij}) - (g \tensora \id)\sigma_{23}(e_k \tensora e_l \tensora e_j (\Gamma_0)^i_{kl} + e_k \tensora e_l \tensora e_i (\Gamma_0)^j_{kl}).
	\end{aligned}
	\end{equation} \label{10feb21sm8}
	Simplifying \eqref{10feb21sm7}, we get
	\begin{equation} \label{10feb21sm9}
	\sum_l e_l \sum_k \big( g_{kj} L^i_{kl} + g_{ki} L^j_{kl} \big)
	= \sum_l e_l \big(\partial_l(g_{ij}) - \sum_k (g_{kj} (\Gamma_0)^i_{kl} + g_{ki} (\Gamma_0)^j_{kl}) \big).
	\end{equation}
	Next we collect the coefficients of $e_l$ from \eqref{10feb21sm9} to get the equation
	\begin{equation} \label{10feb21sm10}
	\sum_k g_{kj} L^i_{kl} = - \sum_k g_{ki} L^j_{kl} + \partial_l(g_{ij}) - \sum_k (g_{kj} (\Gamma_0)^i_{kl} + g_{ki} (\Gamma_0)^j_{kl}).
	\end{equation}
	Applying $g^{mj}$ to both sides of \eqref{10feb21sm10} and summing over the index $j$, we get the following identity for all $i, l, m$:
	\begin{equation} \label{10feb21sm11}
	\begin{aligned}
	L^i_{ml} \big( = \sum_{jk} g^{mj} g_{kj} L^i_{kl} \big) = & - \sum_{jk} g^{mj}g_{ki} L^j_{kl} + \sum_j g^{mj} \partial_l(g_{ij})\\
	& - (\Gamma_0)^i_{ml} - \sum_{jk} g^{mj} g_{ki} (\Gamma_0)^j_{kl}.
	\end{aligned}
	\end{equation}
	We have used the facts that $g_{ij} = g_{ji}$ for all $i, j$ and $\sum_j g^{ij} g_{jk} = \delta_{ik}$ for all $i, k$ while obtaining the above identity. We will be using these and the fact $g^{ij} = g^{ji}$ often in the proof of the current theorem.\\
	\indent Recall from \eqref{10feb21sm5} that	$L^i_{ml} = L^i_{lm}$ for all $i,l,m$. Then, swapping the indices $l$ and $m$ in the right hand side of \eqref{10feb21sm11}, we get
	\begin{equation} \label{10feb21sm12}
	\begin{aligned}
	L^i_{ml} \big( = L^i_{lm} \big) = & - \sum_{jk} g^{lj}g_{ki} L^j_{km} + \sum_j g^{lj} \partial_m(g_{ij})\\
	& - (\Gamma_0)^i_{lm} - \sum_{jk} g^{lj} g_{ki} (\Gamma_0)^j_{km}.
	\end{aligned}
	\end{equation}
	Next we apply $g^{ni}$ on both sides of \eqref{10feb21sm12} and sum over the index $i$ to arrive at
	\begin{equation}
	\begin{aligned}
	\sum_i g^{ni} L^i_{ml} = & - \sum_{ijk} g^{ni} g^{lj} g_{ki} L^j_{km} + \sum_{ij} g^{ni} g^{lj} \partial_m (g_{ij}) \\
	& - \sum_i g^{ni} (\Gamma_0)^i_{lm} - \sum_{ijk} g^{ni} g^{lj} g_{ki} (\Gamma_0)^j_{km}.
	\end{aligned}
	\end{equation}
	Using the fact that $g^{ij}$ and $g_{kl}$ commute for all $i,j,k,l$ as well as the previously discussed properties of $g$, we simplify the above equation to get
	\begin{equation} \label{10feb21sm13}
	\begin{aligned}
	& \sum_i \big( g^{ni} L^{i}_{ml} + g^{li} L^i_{nm} \big) \\
	= & \sum_{ij} g^{ni} g^{lj} \partial_m (g_{ij}) - \sum_i \big(g^{ni} (\Gamma_0)^i_{lm} - g^{li} (\Gamma_0)^i_{nm} \big).
	\end{aligned}
	\end{equation}
	Let us now observe the left hand side of \eqref{10feb21sm13}. The expressions $\sum_i g^{ni} L^i_{ml}$ and $\sum_i g^{li}L^i_{nm}$ are related by a cyclic permutation of $(l,m,n)$. Hence, permuting $l$, $m$ and $n$ in the expression of \eqref{10feb21sm13}, we obtain three distinct equations in three symbolic unknowns $\sum_i g^{(n)i} L^i_{(m)(l)}$, where $((l),(m),(n))$ are cyclic permutations of $(l,m,n)$. This is explicitly solvable for $\sum_i g^{ni} L^i_{ml}$ and we get
	\begin{equation} \label{10feb21sm14}
	\begin{aligned}
	\sum_i g^{ni} L^i_{ml} = & \frac{1}{2} \sum_{ij} \Big( g^{ni} g^{lj} \partial_m(g_{ij}) + g^{mi} g^{nj} \partial_l(g_{ij}) - g^{li} g^{mj} \partial_n (g_{ij}) \Big) \\
	& - \frac{1}{2} \sum_i\Big( g^{ni} (\Gamma_0)^i_{lm} + g^{mi} (\Gamma_0)^i_{nl} - g^{li} (\Gamma_0)^i_{mn} \Big) \\
	& - \frac{1}{2} \sum_i \Big( g^{li} (\Gamma_0)^i_{nm} + g^{ni} (\Gamma_0)^i_{ml} - g^{mi} (\Gamma_0)^i_{ln} .\Big)
	\end{aligned}
	\end{equation}
	We further simplify \eqref{10feb21sm14} by applying $g_{pn}$ to both sides and summing over the index $n$ to get
	\begin{equation}
	\begin{aligned}
	L^p_{ml} = & \frac{1}{2} \Big( \sum_j g^{lj} \partial_m(g_{pj}) + \sum_i g^{mi} \partial_l(g_{ip}) - \sum_{ijn} g_{pn} g^{li}g^{mj} \partial_n(g_{ij}) \Big)\\
	& - \frac{1}{2} \Big( (\Gamma_0)^p_{lm} + \sum_{in} g_{pn} g^{mi} (\Gamma_0)^i_{nl} - \sum_{in} g_{pn} g^{li} (\Gamma_0)^i_{mn} \Big) \\
	& - \frac{1}{2} \Big( \sum_{in} g_{pn} g^{li} (\Gamma_0)^i_{nm} + (\Gamma_0)^p_{ml} - \sum_{in} g_{pn} g^{mi} (\Gamma_0)^i_{ln} \Big).
	\end{aligned}
	\end{equation}
	Recalling, then, from (\ref{10feb21sm4}) that $\Gamma^p_{ml} = (\Gamma_0)^p_{ml} + L^p_{ml}$ and simplifying further, we finally get
	\begin{equation}
	\begin{aligned}
	\Gamma^p_{ml} = & \frac{1}{2}\Big( \sum_j g^{lj} \partial_m (g_{pj}) + \sum_i g^{mi} \partial_l (g_{ip}) -\sum_{ijn} g_{pn} g^{li} g^{mj} \partial_n (g_{ij}) \Big) \\
	& + \frac{1}{2} \Big( (\Gamma_0)^p_{ml} - (\Gamma_0)^p_{lm} \Big) 
	+ \frac{1}{2} \sum_{in} g_{pn}g^{mi} \Big( (\Gamma_0)^i_{ln} - (\Gamma_0)^i_{nl} \Big) \\
	& + \frac{1}{2} \sum_{in} g_{pn} g^{li} \Big( (\Gamma_0)^i_{mn} - (\Gamma_0)^i_{nm} \Big),
	\end{aligned}
	\end{equation} which was to be obtained.
\end{proof}

\subsection{Tame calculus on a Hopf algebra} Let $(\cla,\Delta)$ be a Hopf algebra such that there is a tame calculus $(\Omega(\cla),\wedge,d)$ on it. Then it is natural to expect bicovariance of the first order differential calculus $(\Omega^{1}(\cla),d)$ with respect to the coproduct. For the rest of this subsection we concentrate on a general first order differential calculus on a Hopf algebra. 

\bdfn \cite[Definition 1.2]{Woronowicz}
Let $(\cla, \Delta)$ be a Hopf algebra and $(\Omega^1(\cla), d)$ be a first order differential calculus on it. Then
\begin{enumerate}
	\item $(\Omega^1(\cla), d)$ is said to be left covariant if for any finite set of elements $a_k$, $b_k$ in $\cla$ satisfying $\sum_k a_k d(b_k) = 0$, we have
	\[ \sum_k \Delta(a_k) (\id \otimes d)\Delta(b_k) = 0. \]
	\item $(\Omega^1(\cla), d)$ is said to be right covariant if for any finite set of elements $a_k$, $b_k$ in $\cla$ satisfying $\sum_k a_k d(b_k) = 0$, we have
	\[ \sum_k \Delta(a_k) (d \otimes \id)\Delta(b_k) = 0. \]
	\item $(\Omega^1(\cla), d)$ is said to be bicovariant if it is both left and right covariant.
\end{enumerate}
\edfn

We record the following result for a cocommutative Hopf algebra. Recall that a Hopf-algebra is said to be cocommutative if $\Delta(a)=a_{(1)}\ot a_{(2)}=a_{(2)}\ot a_{(1)}$, where we have used the usual Sweedler's notations.
\blmma \label{31jan21sm2}
A left covariant calculus $(\Omega^1(\cla), d)$ on a cocommutative Hopf algebra $\cla$ is bicovariant.
\elmma

\begin{proof}
	Since $\cla$ is a cocommutative Hopf algebra, we have that for any $a$ in $\cla$, \[\Delta(a) = a_{(1)} \otimes a_{(2)} = a_{(2)} \otimes a_{(1)}.\]
	Since $(\Omega^1(\cla), d)$ is a left covariant differential calculus, for any finite set of elements $a_k$, $b_k$ in $\cla$ satisfying $\sum_k a_k d(b_k) = 0$, we have that
	\[ \sum_k \Delta(a_k)(\id \otimes d)\Delta(b_k) = 0. \]
	Thus, using the cocommutativity of $\cla$, we get
	\[ \sum_k ({a_k}_{(1)} \otimes {a_k}_{(2)})({b_k}_{(1)} \otimes d({b_k}_{(2)})) = \sum_k ({a_k}_{(2)} \otimes {a_k}_{(1)})({b_k}_{(2)} \otimes d({b_k}_{(1)})) = 0. \]
	Recall that the map $\flip : \cla \otimes \Omega^1(\cla) \to \Omega^1(\cla) \otimes \cla$ defined on simple tensors by
	\[ a \otimes e \to e \otimes a\]
	is a well-defined $\IC$\nobreakdash-linear map. Hence, we have that
	\begin{align*} &\ \flip(\sum_k ({a_k}_{(2)} \otimes {a_k}_{(1)})({b_k}_{(2)} \otimes d({b_k}_{(1)}))) = \flip(\sum_k {a_k}_{(2)} {b_k}_{(2)} \otimes {a_k}_{(1)} d({b_k}_{(1)})) \\ = & \ \sum_k {a_k}_{(1)} d({b_k}_{(1)}) \otimes {a_k}_{(2)} {b_k}_{(2)} = 0. \end{align*}
	Thus we have proved that for any $a_k$, $b_k$ in $\cla$ satisfying $\sum_k a_k d(b_k) = 0$, \[ \sum_k \Delta(a_k)(d \otimes \id)\Delta(b_k) = 0. \]
	But this is precisely the definition of a right covariant differential calculus. Hence $(\Omega^1(\cla), d)$ is a bicovariant differential calculus as well.
\end{proof}

\subsection{Connes' space of forms over a $\ast$\nobreakdash-algebra}
\label{Connes_forms}
In this subsection, we recall the construction of Connes' space of forms in a way more suited to our purposes. We consider a triple $(\cla,\clh,\cld)$ (called a Dirac triple), where $\cla$ is a $\ast$\nobreakdash-algebra faithfully represented on $\clb(\clh)$ and $\cld$ is a priori an unbounded operator with $[\cld,a]$ in $\clb(\clh)$ for all $a$ in $\cla$. As in \cite{Cuntz}, we do not assume any summability or compactness of $(\cla,\clh,\cld)$. For a $\ast$\nobreakdash-algebra $\cla$, recall the reduced universal differential graded algebra $(\Omega^{\bullet}(\cla):=\oplus_{k} \Omega^{k}(\cla),\delta)$ from \cite{Connes}. Given a Dirac triple $(\cla,\clh,\cld)$ over $\cla$, there is a well defined $\ast$\nobreakdash-representation $\Pi$ of $\Omega^{\bullet}(\cla)$ on $\clb(\clh)$ given by the following (see \cite{Landi}):
\[
\Pi(a_{0}\delta a_{1}...\delta a_{k})=a_{0}[\cld,a_{1}]...[\cld,a_{k}], \ \text{where} \ a_{0},...,a_{k}\in\cla.
\]
Let $J^{k}_{0}=\{\omega\in\Omega^{k}(\cla):\Pi(\omega)=0\}$. The Connes' space of $k$\nobreakdash-forms is defined to be 
\[
\Omega^{k}_{\cld}(\cla)=\Pi(\Omega^{k}(\cla))/\Pi(\delta J^{k-1}_{0}).
\]
$\Pi(\delta J^{k-1}_{0})$ is a two sided ideal of $\Pi(\Omega^{k}(\cla))$ and is called the space of junk forms. For any element $\omega$ in $\Omega^{k}(\cla)$, we denote the image of $\Pi(\omega)$ in $\Omega^{k}_{\cld}(\cla)$ by $\overline{\Pi(\omega)}$. It can be shown that $(\Omega_{\cld}(\cla):=\oplus_{k}\Omega^{k}_{\cld}(\cla),d)$ where $d$ is defined as $d\overline{\Pi(\omega)}:=\overline{\Pi(\delta\omega)}$ is a differential calculus. In this paper, we shall construct tame calculi on a $\ast$\nobreakdash-algebras using the above prescription.


\section{Tame differential calculus from toral actions}
In this section, we first find a large class of $\textup{C}^{\ast}$\nobreakdash-algebras on which a tame calculus can be constructed. On such a calculus, a unique Levi-Civita connection exists for any strongly $\sigma$\nobreakdash-compatible metric. Writing the connection and curvature forms explicitly using the Christoffel symbols, we shall then go on to obtain a version of Bianchi identity in our setup. Finally, we show that toral action may be used to produce $\textup{C}^*$\nobreakdash-algebras in the class mentioned above; in this case, the Christoffel symbols take a further simple form. 
\bthm

\label{main}
Let $A$ be a ($\textup{C}^*$\nobreakdash-) algebra generated by $n$\nobreakdash-isometries $S_{1},\dots ,S_{n}$. Let us denote the dense $\ast$-subalgebra of $A$ generated by $S_{1},\ldots,S_{n}$ by $\cla$. Assume that $\cla$ admits a separating family of derivations $\partial_{1},\dots ,\partial_{n}$ in the sense that $\partial_{i}(S_{j})=\delta_{ij}S_{i}$ and $\partial_{i}(S_{j}^{\ast})=-\delta_{ij}S_{j}^{\ast}$. Then 	\begin{enumerate}
	\item There is a tame differential calculus on $\cla$ satisfying the conditions of \cite[Proposition 6.8]{conformal_bhowmick}, i.e.,  $\Omega^{1}(\cla)$ is a free $\cla$\nobreakdash-module with basis $e_{1},\ldots,e_{n}$ and for each $i=1,\dots,n$, $d(e_{i})=0$. 
	\item $\Omega^{2}(\cla)$ is a free module with basis $\{e_{i}\wedge e_{j}\}_{1\leq i<j\leq n}$, where $\wedge:\Omega^{1}(\cla)\ot_{\cla}\Omega^{1}(\cla)\raro\Omega^{2}(\cla)$ is the right $\cla$\nobreakdash-linear map as in \cite[Definition 2.1]{Bhowmick_gos_joardar}. 
	\item The operator $d : \Omega^1(\mathcal{A}) \rightarrow \Omega^2(\mathcal{A})$ is given by the following formula:
	\begin{equation}
	\label{d}
	d(\sum_{i=1}^{n}e_{i}a_{i})=\sum_{1\leq p<q\leq n}e_{p}\wedge e_{q}\Big(\sum_{i=1}^{n}(\partial_{p}(a_{i}S_{i}^{\ast})\partial_{q}(S_{i})-\partial_{q}(a_{i}S_{i}^{\ast})\partial_{p}(S_{i}))\Big)
	\end{equation}
	\item The calculus admits a bilinear metric $g_{0}$ given by $g_{0}(e_{i}\ot e_{j})=\delta_{ij}$.
\end{enumerate}
	\ethm

	\begin{proof} 
	Let $N=2^{[\frac{n}{2}]}$, where $[\cdot]$ is the greatest integer function. We fix a faithful state $\tau$ on $A$ and let $L^{2}(A,\tau)$ be the corresponding GNS space. Let $\clh$ be the Hilbert space $L^{2}(A,\tau)\ot\mathbb{C}^{N}$ and we define a representation of $\cla$ on $\clh$ by $\pi(a):=a\ot\mathbb{I}$. Let $\cld$ be the densely defined (unbounded) operator on $\clh$ given by $\sum_{i=1}^{n}\partial_{i}\ot\gamma_{i}$, where $\{\gamma_{i}\}_{i=1}^{n}$ are standard gamma matrices acting on the vector space $\mathbb{C}^{N}$ satisfying $\gamma_{i}^{2}=\mathbb{I}$ and $\gamma_{i}\gamma_{j}=-\gamma_{j}\gamma_{i}$ for $i\neq j$. It is easy to see that for each $a$ in $\cla$, $[\cld,\pi(a)]=\sum_{i=1}^{n}\partial_{i}(a)\otimes \gamma_{i} \in \clb(\clh)$ and hence $(\cla,\clh,\cld)$ is a spectral data in our sense.
    \subsection*{Connes space of one\nobreakdash-forms}
    Recall the definition of Connes space of $k$\nobreakdash-forms $\Omega^{k}_{\cld}(\cla)$ from Subsection \ref{Connes_forms}. We claim that $\Omega^{1}_{\cld}$ is a free bimodule of rank $n$ with basis $\{1\ot\gamma_{i}\}_{i=1}^{n}$. To see this, we observe that $\pi(a)[\cld,\pi(b)]=\sum_{i}a\partial_{i}(b)\ot\gamma_{i}$ which yields $\Omega^{1}_{\cld}(\cla)\subset \cla\oplus\ldots\oplus\cla$. For the other inclusion, it is enough to note that $\pi(S_{i}^{\ast})[\cld,\pi(S_{i})]=S_{i}^{\ast}S_{i}\ot\gamma_{i}=1\otimes \gamma_i$, since $S_{i}$ is an isometry. We henceforth write $e_i$ for $1\otimes \gamma_i$, $i=1,\dots,n$.
    \subsection*{Connes space of two\nobreakdash-forms}
	By definition, $\Omega^{2}_{\cld}(\cla)=\Pi(\Omega^{2})/\Pi(\delta J_{0}^{1})$. It can be proved exactly along the lines of \cite{Cuntz} that for $r=\frac{n(n-1)}{2}+1$,
		\[
		\Pi(\Omega^{2}(\cla))=\cla\underbrace{\oplus\ldots\oplus}_{r\text{-times}}\cla.
		\]
		The space of junk forms is given by the free module $\pi(\cla)\ot\mathbb{I}$ in $\clb(\clh)$ proving that $\Omega^{2}_{\cld}(\cla)$ is a free module of rank $\frac{n(n-1)}{2}$ with basis $\{1\ot\gamma_{i}\gamma_{j}\}_{1\leq i<j\leq n}$. The fact that the junk forms are contained in $\pi(\cla)\ot\mathbb{I}$ follows from similar arguments as in \cite{Cuntz}. For the equality, we let $\omega=S_{1}^{\ast}\delta{S_{1}}+\delta(S_{1}^{\ast})S_{1}$ and observe that $\Pi(\omega)=0$, $\Pi(\delta(\omega))=2\ot\mathbb{I}$, which proves the desired result. As above, we write $e_{ij}$ for $1\otimes \gamma_i\gamma_j$, $1\leq i < j \leq n$.
		\subsection*{The multiplication map $\wedge$}
		The multiplication map $\wedge:\Omega^{1}_{\cld}(\cla)\ot_{\cla}\Omega^{1}_{\cld}(\cla)\raro \Omega^{2}_{\cld}(\cla)$ is surjective and is given by
		\begin{equation}
		\label{mult}
		\wedge(\sum_{i=1}^{n}e_{i}a_{i},\sum_{i=1}^{n}e_{i}b_{i})=\sum_{i<j}e_{ij}(a_{i}b_{j}-a_{j}b_{i}).
		\end{equation}
		Proof of the formula \eqref{mult} follows exactly the same lines of Lemma 3.3 of \cite{Cuntz}. Now the existence of the canonical bilinear metric and the fact that $(\Omega_{\cld}(\cla),d)$ is a tame calculus can be established by the same considerations as in \cite{Cuntz}. For the proof of the formula \eqref{d}, see Lemma 5.5 of \cite{Satyajit_jncg}. The fact that $d(e_{i})=0$ is a consequence of the formula \eqref{d}. 
		\end{proof}
	\brmrks
	\label{wedge_extension}
	\hfill
	\begin{enumerate}
		\item We shall not calculate Connes' space of $n$\nobreakdash-forms for $n\geq 3$ as we shall not need them explicitly in this paper. However, to prove the Bianchi identity, we shall use the fact that Connes' space of forms is a differential graded algebra (see Definition 17 of \cite{Landi}) i.e., the wedge product $\wedge$ and the differential $d$ extends naturally to the spaces of $n$\nobreakdash-forms for all $n$. 
		\item We shall say a canonical tame calculus on $A$ to refer to the calculus obtained on the dense $\ast$\nobreakdash-subalgebra $\cla$.
		\item By a smooth deformation parameter $k\in A$, we shall mean that $k$ is invertible and $k,k^{-1}\in\cla$.  
	\end{enumerate}
	\ermrks

	\textbf{Henceforth, whenever we talk about a canonical differential calculus on a $\textup{C}^*$\nobreakdash-algebra $A$ we mean the differential calculus obtained in Theorem \ref{main} on $\cla$ unless mentioned otherwise.} 
	
	\medskip \noindent
	Since the differential calculus is tame, for any strongly $\sigma$\nobreakdash-compatible metric $g$, there exists a unique Levi-Civita connection to be denoted by $\nabla_{g}$. Let us denote the Christoffel symbols for a Levi-Civita connection $\nabla_{g}$ by $\Gamma^{i}_{jk}$ i.e.
	\[
	\nabla_{g}(e_{i})=\sum_{j,k=1}^{n}e_{j}\ot e_{k}\Gamma^{i}_{jk}.
	\]
	In particular, we define the connection  1-forms $(\omega_{ij})_{i,j=1,\ldots,n}$ by \[\omega_{ij}=\sum_{k=1}^{n}e_{k}\Gamma^{i}_{jk} \quad \text{so that} \quad \nabla_{g}(e_{i})=\sum_{j=1}^{n}e_{j}\ot\omega_{ij}.\] To define the curvature two\nobreakdash-forms $(\Omega_{ij})_{i,j=1,\ldots,n}$, note that the differential calculus obtained as in Theorem \ref{main} satisfies the conditions of Proposition 6.8 of \cite{conformal_bhowmick} and hence the results obtained in \cite{conformal_bhowmick} continue to hold here. Therefore we have the curvature operator:
	\begin{equation}
	\label{curvature}
	R(\nabla_{g})(e_{i})=\sum_{j,k,l=1}^{n}e_{j}\ot e_{k}\ot e_{l} r^{i}_{jkl},
	\end{equation}
	where \[r^{i}_{jkl}=\frac{1}{2}\sum_{p=1}^{n}\Big((\Gamma^{p}_{jk}\Gamma^{i}_{pl}-\Gamma^{p}_{jl}\Gamma^{i}_{pk})-\partial_{l}(\Gamma^{i}_{jk})+\partial_{k}(\Gamma^{i}_{jl})\Big).\] Using the facts that $\wedge$ is right $\cla$\nobreakdash-linear and $e_{i}\wedge e_{j}=-e_{j}\wedge e_{i}$ for all $i,j=1,\ldots,n$, we obtain the curvature two\nobreakdash-forms $\Omega_{ij}$ as
	\begin{equation}
	\label{curvatureform}
	\Omega_{ij}=\sum_{1\leq k<l\leq n}\Big(\sum_{p=1}^{n}[(\Gamma^{p}_{jk}\Gamma^{i}_{pl}-\Gamma^{p}_{jl}\Gamma^{i}_{pk})-\partial_{l}(\Gamma^{i}_{jk})+\partial_{k}(\Gamma^{i}_{jl})]\Big)e_{k}\wedge e_{l}.
	\end{equation}
Having defined the connection and curvature forms, we shall prove a relation between them which will be used to prove the Bianchi identity.

\bppsn
\label{Bianchiidentity1}
Let $g$ be a strongly $\sigma$\nobreakdash-compatible metric and $\nabla_{g}$ be the associated Levi-Civita connection on the tame calculus obtained in Theorem \ref{main}. Let $\omega$ and $\Omega$ be the matrices of connection and curvature forms, respectively. Then the following identity holds:
\begin{equation}\label{alg_Bianchiidentity}
	\Omega_{ij}=d\omega_{ij}+\sum_{p=1}^{n}\omega_{pj}\wedge\omega_{ip},
\end{equation} for $i,j=1,\dots,n$.
	\eppsn
	\begin{proof}
		Applying $d$ on both sides of $\omega_{ij}=\sum_{p=1}^{n}\Gamma^{i}_{jp}e_{p}$, together with the formula \eqref{d} and the fact that the derivations are separating in the sense of Theorem \ref{main}, we obtain
		\begin{equation}\label{1}
		\begin{aligned}
		d\omega_{ij}&=\sum_{1\leq k<l\leq n}e_{k}\wedge e_{l}\Bigg(\sum_{p=1}^{n}\Big(\partial_{k}(\Gamma^{i}_{jp}S_{p}^{\ast})\partial_{l}(S_{p})-\partial_{l}(\Gamma^{i}_{jp}S_{p}^{\ast})\partial_{k}(S_{p})\Big)\Bigg)\\
		&=\sum_{1\leq k<l\leq n}e_{k}\wedge e_{l}\Big(\partial_{k}(\Gamma^{i}_{jl})-\partial_{l}(\Gamma^{i}_{jk})\Big)
	\end{aligned}
\end{equation}
		Now $e_{i}$'s are central, $\wedge$ is right $\cla$\nobreakdash-linear and $e_{k}\wedge e_{l}=-e_{l}\wedge e_{k}$ for all $k,l=1,\ldots,n$, so that we have
		\begin{equation}
		\label{2} 
		\begin{aligned}
		\sum_{p=1}^{n}\omega_{pj}\wedge\omega_{ip}&=(\sum_{k=1}^{n}e_{k}\Gamma^{p}_{jk})\wedge (\sum_{l=1}^{n}e_{l}\Gamma^{i}_{pl})\\
		&=\sum_{1\leq k<l\leq n}e_{k}\wedge e_{l}\Big(\sum_{p=1}^{n}(\Gamma^{p}_{jk}\Gamma^{i}_{pl}-\Gamma^{p}_{jl}\Gamma^{i}_{pk})\Big)
		\end{aligned}
		\end{equation}
		Combining (\ref{1}) and (\ref{2}), we obtain (\ref{alg_Bianchiidentity}).
		\end{proof} 
\bcrlre[The Bianchi identity] The Bianchi identity holds in the following form
\begin{eqnarray}
\label{Bianchiidentity}
d\Omega_{ij}=\sum_{p=1}^{n}(\Omega_{pj}\wedge\omega_{ip}-\omega_{pj}\wedge\Omega_{ip}).
\end{eqnarray}
\ecrlre
\begin{proof} As mentioned in Remark \ref{wedge_extension}, $\wedge$ and $d$ extend to the space of all $n$\nobreakdash-forms. Applying the exterior derivative $d: \Omega^2(\A) \to \Omega^3(\A)$ to both sides of the equation \eqref{alg_Bianchiidentity}, we get
	\begin{eqnarray*}
	d \Omega_{ij} &=& \sum_{p=1}^n d \omega_{pj} \wedge \omega_{ip} - \sum_{p=1}^n \omega_{pj} \wedge d \omega_{ip}\\
	&=& \sum_{p=1}^n (\Omega_{pj} - \sum_{q=1}^n \omega_{qj} \wedge \omega_{pq}) \wedge \omega_{ip} - \sum_{p=1}^n \omega_{pj} \wedge (\Omega_{ip} - \sum_{r=1} \omega_{rp} \wedge \omega_{ir}) \ (by \ \eqref{alg_Bianchiidentity})\\
	&=& \sum_{p=1}^n \Omega_{pj} \wedge \omega_{ip} - \sum_{p,q = 1}^n \omega_{qj} \wedge \omega_{pq} \wedge \omega_{ip}\\
	&& \ - \sum_{p=1}^n \omega_{pj} \wedge \Omega_{ip} + \sum_{p,r = 1}^n \omega_{pj} \wedge \omega_{rp} \wedge \omega_{ir}\\
	&=& \sum_{p=1}^n (\Omega_{pj} \wedge \omega_{ip} - \omega_{pj} \wedge \Omega_{ip}),
	\end{eqnarray*}which was to be obtained.
	\end{proof}	\brmrk
	Note the slight difference between \eqref{Bianchiidentity} and the Bianchi identity from classical differential geometry, (in matrix notation, $d\Omega=\omega \wedge \Omega - \Omega \wedge \omega$, see page 297, Eq.(2) of \cite{milnor}) due to the noncommutativity of the Christoffel symbols.
	\ermrk 

	One of the ways to generate a family of derivations as in Theorem \ref{main} is to look for toral actions.
	\blmma
	\label{toralaction}
	Let $\cla$ be a finitely generated ($\textup{C}^*$\nobreakdash-) algebra with generators $S_{1},\dots,S_{n}$, admitting an action of $\mathbb{T}^{n}$ such that on the generators, the action is given by
	\[
	(\lambda_{1},\ldots,\lambda_{n})(S_{j}):=\lambda_{j}S_{j}, \ \lambda_{j}\in\mathbb{T}.
	\]
	Then there exist $n$ derivations which are separating in the sense of Proposition \ref{main}\elmma
	\begin{proof}
		The proof is immediate.
	\end{proof}

We shall now determine the Christoffel symbols for the unique Levi-Civita connection $\nabla_{g}$ for a strongly $\sigma$\nobreakdash-compatible metric $g$ on the differential calculus obtained in Theorem \ref{main}. 


\bcrlre
\label{main_christoffel}
Let $(\Omega(\cla),d)$ be the tame differential calculus as obtained in Theorem \ref{main}. Then for any strongly $\sigma$\nobreakdash-compatible metric $g$, we have the following formula for the Christoffel symbols:
\begin{equation}
\label{Christoffel}
\begin{aligned}
\Gamma^p_{ml} = \frac{1}{2}\Big( \sum_j g^{lj} \partial_m (g_{pj}) + \sum_i g^{mi} \partial_l (g_{ip}) -\sum_{ijn} g_{pn} g^{li} g^{mj} \partial_n (g_{ij}) \Big)
\end{aligned} 
\end{equation}
\ecrlre

\begin{proof}
It is clear that the calculus satisfies the assumptions of Theorem \ref{christoffelgeneral}. Further, $d(e_{i})=0$ for all $i$, which implies that we can choose a torsionless connection $\nabla_{0}$ such that $\nabla_{0}(e_{i})=0$ $\forall i$. Therefore the formula is obtained by putting $(\Gamma_{0})^{i}_{jk}=0$ for all $i,j,k$ in equation (\ref{10feb21sm15}).
\end{proof}

\begin{crlre}
	\label{christoffelreduced}
	In addition to the hypothesis of Corollary \ref{main_christoffel}, assume further that the algebra $\A_G$ as in Remark \ref{metricalgbera} is closed under $\partial_n$ for all $n$. Then the Christoffel symbols $\Gamma^p_{ml}$ have the following further reduced form:
	\begin{equation} \label{19feb21sm2}
	\Gamma^p_{ml} = - \frac{1}{2} \sum_ n g_{pn} \Big( \partial_m(g^{ln}) + \partial_l (g^{mn}) - \partial_n (g^{ml}) \Big).
	\end{equation}
\end{crlre}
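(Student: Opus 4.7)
The starting point is the formula \eqref{Christoffel} from Corollary \ref{main_christoffel}. The plan is to convert every derivative $\partial(g_{\cdot\cdot})$ appearing there into a derivative $\partial(g^{\cdot\cdot})$, at which point the three remaining terms will collapse into the required shape. The key hypothesis that $\A_G$ is closed under the $\partial_n$'s guarantees that all of $g_{ij}$, $g^{ij}$, $\partial_n(g_{ij})$, $\partial_n(g^{ij})$ lie in the commutative subalgebra $\A_G$, so throughout the manipulation one can freely commute these quantities.

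The main tool is the identity $\sum_j g^{ij} g_{jk} = \delta_{ik}$. Applying $\partial_m$ and using the Leibniz rule yields
\[
\sum_j g^{lj}\,\partial_m(g_{jp}) \;=\; -\sum_j \partial_m(g^{lj})\,g_{jp} \;=\; -\sum_n g_{pn}\,\partial_m(g^{ln}),
\]
using symmetry $g_{jp} = g_{pj}$ (and relabelling $j\to n$). This transforms the first summand of \eqref{Christoffel} into $-\sum_n g_{pn}\partial_m(g^{ln})$. Exactly the same computation, with the roles of $(l,m)$ swapped, turns the second summand into $-\sum_n g_{pn}\partial_l(g^{mn})$.

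For the third summand $\sum_{ijn} g_{pn} g^{li} g^{mj}\partial_n(g_{ij})$, I would first contract the inner $i$-sum using the same Leibniz trick:
\[
\sum_i g^{li}\partial_n(g_{ij}) \;=\; -\sum_i \partial_n(g^{li})\,g_{ij}.
\]
Then summing against $g^{mj}$ and using $\sum_j g^{mj} g_{ij} = \delta_{mi}$ (legitimate because everything is in the commutative algebra $\A_G$) gives
\[
\sum_{ij} g^{li} g^{mj}\,\partial_n(g_{ij}) \;=\; -\partial_n(g^{ml}),
\]
so the third summand equals $-\sum_n g_{pn}\partial_n(g^{ml})$.

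Substituting these three simplifications back into \eqref{Christoffel} produces exactly
\[
\Gamma^p_{ml} \;=\; -\frac{1}{2}\sum_n g_{pn}\Big(\partial_m(g^{ln}) + \partial_l(g^{mn}) - \partial_n(g^{ml})\Big),
\]
which is \eqref{19feb21sm2}. There is no real obstacle here — the calculation is essentially bookkeeping — but the one place requiring care is the commutations: every step where a factor $g^{mj}$ is moved past $\partial_n(g^{li})$, or where $g_{pn}$ is pulled outside, must be justified by the hypothesis that $\partial_n(\A_G)\subseteq \A_G$ so that the commutativity of $\A_G$ can be invoked.
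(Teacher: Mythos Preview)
Your proof is correct and follows essentially the same route as the paper: both arguments convert each $\partial(g_{\cdot\cdot})$ into a $\partial(g^{\cdot\cdot})$ via the Leibniz rule applied to $\sum_j g^{ij}g_{jk}=\delta_{ik}$, relying on the hypothesis $\partial_n(\A_G)\subseteq\A_G$ to commute all factors. The only cosmetic difference is that for the third summand you contract the $i$-index first while the paper contracts the $j$-index first, which is immaterial by the symmetry $g^{ml}=g^{lm}$.
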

\begin{proof}
	Note that $g_{ij}$, $g^{kl}$, $\partial_m(g_{pq})$ and $\partial_n(g^{rs})$ all commute with each other. Then using the Leibniz relation of $\partial_m$, we compute the following:
	\allowdisplaybreaks{
	\begin{align*}
	\Gamma^p_{ml} = \ & \frac{1}{2}\Big( \sum_j g^{lj} \partial_m (g_{pj}) + \sum_i g^{mi} \partial_l (g_{ip}) - \sum_{ijn} g_{pn} g^{li} g^{mj} \partial_n (g_{ij}) \Big) \\
	& ({\rm using} \ \eqref{Christoffel})\\
	= \ & \frac{1}{2}\Big( \sum_j (\partial_m (g^{lj} g_{pj}) - \partial_m (g^{lj}) g_{pj} ) \\
	& + \sum_i (\partial_l (g^{mi} g_{ip}) - \partial_l (g^{mi}) g_{ip} ) \\
	& - \sum_{ijn} g_{pn} g^{li} (\partial_n (g^{mj} g_{ij}) - \partial_n (g^{mj}) g_{ij}) \Big) \\
	= \ & \frac{1}{2} \Big( - \sum_j \partial_m(g^{lj}) g_{pj} - \sum_i \partial_l (g^{mi}) g_{ip} 
	+ \sum_{ijn} g_{pn} g^{li} g_{ij} \partial_n (g^{mj}) \Big) \\
	= \ & - \frac{1}{2} \sum_ ng_{pn} \Big( \partial_m(g^{ln}) + \partial_l (g^{mn}) - \partial_n (g^{ml}) \Big),
	\end{align*}
}which was to be obtained.
\end{proof}

\brmrk
Under the hypothesis of the Corollary \ref{christoffelreduced}, we see from (\ref{19feb21sm2}) that the Christoffel symbols commute among themselves. Hence the traditional form of the Bianchi identity holds.
\ermrk

\section{Examples: noncommutative torus,\\ Group algebra of the free group, Cuntz algebra} 
In this section, we discuss some explicit examples satisfying the hypothesis of Lemma \ref{toralaction}. It turns out that on the group algebra $\mathbb{C}[\mathbb{F}_n]$, the tame calculus is also bicovariant.

\subsection{The noncommutative $n$\nobreakdash-torus} 
We fix a skew-symmetric $n\times n$ matrix $\Theta=((\theta_{kl}))$. The noncommutative $n$\nobreakdash-torus $\textup{C}(\mathbb{T}^{n}_{\Theta})$ is the universal ($\textup{C}^*$\nobreakdash-) algebra generated by $n$\nobreakdash-unitaries $U_{1},U_{2},\ldots,U_{n}$ subject to the following commutation relations:
\[
U_{k}U_{l}=e^{2i\pi \theta_{kl}}U_{l}U_{k}, \quad k,l=1,\dots,n.
\]
By the universal property, there is a $\mathbb{T}^{n}$\nobreakdash-action on $\textup{C}(\mathbb{T}^{n}_{\Theta})$ as in Lemma \ref{toralaction} and consequently there is a canonical tame calculus as obtained in Theorem \ref{main}.

\begin{rmrk}
	The unique Levi-Civita connection $\nabla_g$ compatible with a strongly $\sigma$\nobreakdash-compatible metric $g$ on the canonical tame calculus of $\textup{C}(\IT^n_\Theta)$, obtained by virtue of Theorem \ref{13jul21sm5}, is the same as that obtained in \cite{rosenberg}. A detailed comparison result can be found in \cite{conformal_bhowmick}.
\end{rmrk}

\subsection{The group algebra of the free group} Let $\mathbb{F}_n$ be the free group on $n$ generators (say, $g_1,\dots,g_n$) and $\textup{C}^{\ast}_{r}(\mathbb{F}_n)$ be the reduced group $\textup{C}^{\ast}$-algebra. Then there is a $\mathbb{T}^n$\nobreakdash-action on $\textup{C}^{\ast}_{r}(\mathbb{F}_{n})$ as in Lemma \ref{toralaction} and we obtain a canonical tame calculus. Note that for the $\textup{C}^{\ast}$-algebra $A=\textup{C}^{\ast}_{r}(\mathbb{F}_{n})$, the dense $\ast$\nobreakdash-subalgebra $\cla$ is nothing but the group algebra $\mathbb{C}[\mathbb{F}_{n}]$.
\bthm \label{13jul21sm4}
The canonical tame calculus on $\mathbb{C}[\mathbb{F}_n]$ is a bicovariant differential calculus. The canonical basis elements $e_i$  are left (and right) invariant.
\ethm
\begin{proof}
	In order to prove that the calculus is left covariant, we need to show that if $\sum_k a_k d(b_k) = 0$ for some $a_k$, $b_k$ in $\cla$, then $\sum_k \Delta(a_k) (\id \otimes d) \Delta(b_k) = 0$. Let us observe that since $d(a) = \sum_{i=1}^n \partial_{i}(a) \otimes \gamma_i$, $\sum_k a_k d(b_k) = 0$ implies that for all $i$,
	\begin{equation} \label{10feb21sm2} \sum_k a_k \partial_i (b_k) = 0. \end{equation}
	Since our algebra is a cocommutative Hopf algebra, there exist $c_{k,j}$, $c^\prime_{k,l}$ in $\IC$ and group-like elements $a_{k,j}$, $b_{k,l}$ in $\cla$ such that for all $k$
	\begin{equation} \label{31jan21sm1}
	a_k = \sum_j c_{k,j} a_{k,j} \ \textnormal{and} \ b_k = \sum_l c^\prime_{k,l} b_{k,l}.
	\end{equation}
	Let the element $b_{k,l}$ be presented by the reduced string $g_{k,l,1} g_{k,l,2} \dots g_{k,l,{n_{k,l}}}$, where each $g_{k,l,m}$ is from the set of generators of the Hopf algebra. Since the Hopf algebra is freely generated and each $b_{k,l}$ is group-like, such a presentation exists and is unique. We denote by $p_{k,l,i}$ the sum of occurences of the generator $g_i$ in the string, counting each occurence of $g_i^{-1}$ as $-1$ and $g_i$ as $+1$. We note that $\partial_j(g_i) = \delta_{ji} g_i$. Then, by the Leibniz rule, we have that $\partial_i(b_{k,l}) = p_{k,l,i} b_{k,l}$. Hence from \eqref{10feb21sm2} we get
	\begin{equation} \label{10feb21sm1}
	\sum_{k,j,l} c_{k,j} c^\prime_{k,l} a_{k,j} \partial_i(b_{k,l}) = \sum_{k,j,l} c_{k,j} c^\prime_{k,l} p_{k,l,i} a_{k,j} b_{k,l} = 0.
	\end{equation}
	Hence, we have that for all $i$,
	\begin{equation} \label{31jan21sm3}
	\begin{aligned}
	&\sum_k \Delta(a_k) (\id \otimes \partial_i) \Delta(b_k) = \sum_{k,j,l} c_{k,j} c^\prime_{k,l} \Delta(a_{k,j}) (\id \otimes \partial_i) \Delta(b_{k,l}) \\
	= \ &\sum_{k,j,l} c_{k,j} c^\prime_{k,l} (a_{k,j} \otimes a_{k,j}) (b_{k,l} \otimes p_{k,l,i} b_{k,l}) \\
	= \ &\sum_k c_{k,j} c^\prime_{k,l} p_{k,l,i} a_{k,j} b_{k,l} \otimes a_{k,j} b_{k,l} = \sum_{k,j,l} c_{k,j} c^\prime_{k,l} p_{k,j,l} \Delta(a_{k,j} b_{k,l}) \\
	= \ & \Delta(\sum_{k,j,l} c_{k,j} c^\prime_{k,l} p_{k,j,l} a_{k,j} b_{k,l}) \\ 
	= \ & 0 \ \textnormal{(by \ \eqref{10feb21sm1})}.
	\end{aligned}
	\end{equation}Then we have that
	\begin{equation*}
		\begin{aligned}
		&\sum_k \Delta(a_k) (\id \otimes d)\Delta(b_k) 
		&=\sum_i \big(\sum_k \Delta(a_k) (\id \otimes \partial_i)\Delta(b_k)\big) \otimes \gamma_i = 0 \ \rm{(by \ \eqref{31jan21sm3})}.
		\end{aligned}
	\end{equation*}
	This proves that the canonical tame calculus is left covariant. By Lemma \ref{31jan21sm2}, we know that this implies the calculus is bicovariant.\\
	To see that the canonical basis elements $e_i$ are left invariant, note that each $e_i = \delta_{g_i}^\ast d(\delta_{g_i})$, where $g_i$ are the corresponding generating elements of the group. Then we have that
	\begin{equation*}
		\begin{aligned}
		&\Delta_{\Omega^1_\cld(\cla)} (e_i) = \Delta_{\Omega^1_\cld(\cla)} (\delta_{g_i}^\ast d(\delta_{g_i}))
		:= \Delta(\delta_{g_i}^\ast) (\id \otimes d)\Delta(\delta_{g_i})\\
		&=(\delta_{g_i}^\ast \otimes \delta_{g_i}^\ast) (\delta_{g_i} \otimes d(\delta_{g_i}))
		= 1 \otimes \delta_{g_i}^\ast d(\delta_{g_i}) = 1 \otimes e_i.
		\end{aligned}
	\end{equation*}
	The proof of right invariance of $e_i$ is similar.
\end{proof}

\subsection{The Cuntz algebra $\clo_{n}$} The Cuntz algebra $\clo_{n}$ is the universal ($\textup{C}^*$\nobreakdash-) algebra generated by $n$\nobreakdash-isometries $S_{1},\ldots,S_{n}$ such that $\sum_{i=1}^{n}S_{i}S_{i}^{\ast}=1$. The universal property produces a $\mathbb{T}^{n}$\nobreakdash-action on $\clo_{n}$ as in Lemma \ref{toralaction}. Consequently there is a canonical tame calculus on $\clo_{n}$ by Theorem \ref{main}. Note that the derivations obtained here are separating in the sense of Theorem \ref{main}. In \cite{Cuntz}, for the Cuntz algebra with three generators, instead of the action of $\mathbb{T}^3$, a canonical action of $\textup{SO}(3)$ is considered. The space of one-forms thus constructed is a centered bimodule of rank three and hence isomorphic as bimodules to the space of one-forms considered in the present paper. However, the two exterior derivatives are different, leading to non-equivalent calculi. In particular, the derivations are not seperating. Moreover, for the present calculus on $\mathcal{O}_n$, thanks to \eqref{Christoffel}, the Christoffel symbols of the Levi-Civita connection corresponding to the bilinear metric $g_0$ vanish and consequently the Ricci tensor as well as the scalar curvature vanish (see Proposition 6.8 of \cite{conformal_bhowmick}). But for the calculus constructed in \cite{Cuntz}, the corresponding metric $g_0$ on the isomorphic bimodule of one-forms leads to the value $-\frac{3}{4}$ for the scalar curvatue of the Levi-Civita connection obtained there (see Theorem 4.7 of \cite{Cuntz}).

\section{Towards a Gauss-Bonnet theorem on the canonical calculus of rank two}

In this section, we shall prove a version of the Gauss-Bonnet theorem for the calculus of rank two obtained here. The formulation is in the spirit of Connes, Khalkhali, Ponge et al. (see \cite{khalkhali2, Sitarz, tretkof}). Since the tame calculus obtained in this paper admits a canonical bilinear metric $g_{0}$, it makes sense to consider the widely studied conformally deformed metric $kg_{0}$ for some smooth deformation parameter $k$. As the Gauss-Bonnet theorem considered so far in the literature deals mostly with a conformally deformed metric, let us briefly discuss about such a metric first. Note that such metrics are strongly $\sigma$\nobreakdash-compatible in our sense.

\subsection{Conformally deformed metrics} The tame differential calculus obtained in Theorem \ref{main} satisfies the assumptions of the Corollary \ref{christoffelgeneral} as well as the Proposition 5.10 of \cite{conformal_bhowmick}. Hence we have the following lemma whose proof is immediate. Recall from point (3) of Remark \ref{wedge_extension} that $k$ being a smooth deformation parameter in a $\textup{C}^{\ast}$-algebra $A$ means that $k$ is invertible and $k,k^{-1}\in\cla$.

\blmma
\label{formulae}
For some smooth deformation parameter $k$ in a $\textup{C}^{\ast}$-algebra $A$ let $g=kg_{0}$ be a conformally deformed metric on the differential calculus obtained in Theorem \ref{main}. Then the Christoffel symbols, for the Levi-Civita connection $\nabla_{g}$ are given by:
\begin{equation}
\label{Christoffelconformal}
\Gamma^{i}_{jl}=\frac{1}{2}\Big(\delta_{il}k^{-1}\partial_{j}(k)+\delta_{ij}k^{-1}\partial_{l}(k)-\delta_{jl}k^{-1}\partial_{i}(k)\Big).
\end{equation} 
\elmma

From now on, we consider a $\textup{C}^*$\nobreakdash-algebra $A$ generated by two isometries, say $S_{1}, S_{2}$ such that the dense $\ast$-subalgebra $\cla$ generated by $S_{1},S_{2}$ admits two separating derivations $\partial_{1},\partial_{2}$ in the sense of discussed before Theorem \ref{main}. Then there is a tame differential calculus as obtained in Theorem \ref{main}, which we call the canonical calculus of rank two on $A$. The space of 2-forms $\Omega^{2}(\cla)$ is a rank one bimodule with basis $e_{1}\wedge e_{2}$, hence for any $\theta$ in $\Omega^{2}(\cla)$, there is a unique $a$ in $\cla$ such that $\theta= e_{1}\wedge e_{2}a$, which amounts to choosing an orientation. Fixing a state $\tau$, we define the integral of $\theta$ as 
\[
\int_{\tau}\theta:=\tau(a).
\]
We briefly recall the definition of the scalar curvature of a torsionless connection $\nabla$ on a tame calculus with a strongly compatible metric $g$. This is obtained by contracting the Ricci curvature. In the following, ${\rm ev}:\Omega^{1}(\cla)^{\ast}\ot_{\cla}\Omega^{1}(\cla)\raro\cla$ is the right $\cla$-linear map sending $\omega^{\ast}\ot\eta$ to $\omega^{\ast}(\eta)$. For the notation $\rho$, consult Section 2.1 of \cite{Cuntz}.
\bdfn [Definition 6.6 of   \cite{conformal_bhowmick}]
\label{Scal} For a tame differential calculus $(\Omega^{1}(\cla),\wedge,d)$ and a torsionless connection $\nabla$, the Ricci curvature ${\rm Ric}$ is defined as the element in $\Omega^{1}(\cla)\ot_{\cla}\Omega^{1}(\cla)$ given by \begin{displaymath}
{\rm Ric}:=({\rm id}\ot_{\cla}{\rm ev}\circ\rho)(\Theta),
\end{displaymath}
where $\Theta$ is the curvature operator. The scalar curvature ${\rm Scal}$  corresponding to a strongly $\sigma$-compatible metric $g$ is defined by
\begin{displaymath}
{\rm Scal}={\rm ev}(V_{g}\ot_{\cla}{\rm id})({\rm Ric})\in\cla.
\end{displaymath}
\edfn 
We have the following formula of the scalar curvature for the conformally deformed metric $g$ whose proof is similar to that of Theorem 6.12 of \cite{conformal_bhowmick}.
\begin{equation}
\label{scalarconformal}
{\rm Scal}=-(\partial_{1}^{2}+\partial_{2}^{2})(k)-k\partial_{1}(k^{-1})\partial_{1}(k)-k\partial_{2}(k^{-1})\partial_{2}(k).
\end{equation}
Using the Leibniz rule for $\partial_{i}$'s we have the following alternative form of the scalar curvature which will be used subsequently
\begin{equation}
\label{scalarconformal1}
{\rm Scal}=-k\partial_{1}\Big(k^{-1}\partial_{1}(k)\Big)-k\partial_{2}\Big(k^{-1}\partial_{2}(k)\Big).
\end{equation}
\brmrk
We remark that as already observed in \cite{conformal_bhowmick}, the above expression for the scalar curvature coincides with the expression in the case of noncommutative $2$\nobreakdash-torus obtained in \cite{rosenberg} (Eq.(4.3) of \cite{rosenberg}).  
\ermrk 
It is well known that for a surface, the Gaussian curvature is half of the scalar curvature. Motivated by this, we define the Gaussian curvature $K$ by the following:
\[
K=\frac{1}{2}{\rm Scal}.
\] For a conformally deformed metric $kg_{0}$, we define the right and left surface integral of $K$ with respect to the state $\tau$ by

\begin{equation}
\int^{R}_{\tau} K \ dS:=\tau(K|k^{-1}|), \ \int^{L}_{\tau}K \ dS:=\tau(|k^{-1}|K)
\end{equation}

\brmrk
The analogue for the Riemannian volume form is the element ${e_{1}\wedge e_{2}|k^{-1}|}$ in $ \Omega^{2}(\cla)$, hence the above two choices for defining the surface integral of a ``function'' on the ``surface'' $\cla$. When $\tau$ is a trace, the integrals coincide and it makes sense to talk about ``the'' surface integral.
\ermrk

  From now on we assume that the $\textup{C}^*$\nobreakdash-algebra admits a tracial state $\tau$. Recall the curvature $2$\nobreakdash-form $\Omega_{ij}$ (see \eqref{curvatureform}), given by
  \begin{equation*}
	\label{curvatureform}
	\Omega_{ij}=\sum_{1\leq k<l\leq n}\Big(\sum_{p=1}^{n}[(\Gamma^{p}_{jk}\Gamma^{i}_{pl}-\Gamma^{p}_{jl}\Gamma^{i}_{pk})-\partial_{l}(\Gamma^{i}_{jk})+\partial_{k}(\Gamma^{i}_{jl})]\Big)e_{k}\wedge e_{l},
	\end{equation*}
	corresponding to the Levi-Civita connection $\nabla_{g}$ for a strongly $\sigma$-compatible metric $g$.

 \bppsn\label{GaussBonneteasy}
 Let $\tau$ be a trace on $A$ and $g = k g_0$ be a conformally deformed metric with a smooth deformation parameter $k$ on the canonical calculus of rank two. Then  $\Omega_{12}=-\Omega_{21}$ and
 \begin{equation}
 \label{GBeasy}
 \int_{\tau}\Omega_{12}=\int_{\tau}K \ dS.
 \end{equation}
 \eppsn
 \begin{proof}
Using \eqref{curvatureform} and the expression for the scalar curvature for a conformally deformed metric, a simple but tedious calculation yields:
 	\begin{equation*}
		\begin{aligned}
 		\Omega_{12}&=e_{1}\wedge e_{2}\Big(\sum_{p=1}^{2}(\Gamma^{p}_{21}\Gamma^{1}_{p2}-\Gamma^{p}_{22}\Gamma^{1}_{p1})+\partial_{1}(\Gamma^{1}_{22})-\partial_{2}(\Gamma^{1}_{21})\Big)\\
 		&=e_{1}\wedge e_{2}\Big(\Gamma^{1}_{21}\Gamma^{1}_{12}-\Gamma^{1}_{22}\Gamma^{1}_{11}+\Gamma^{2}_{21}\Gamma^{1}_{22}-\Gamma^{2}_{22}\Gamma^{1}_{21}+\partial_{1}(\Gamma^{1}_{22})-\partial_{2}(\Gamma^{1}_{21})\Big)\\
 		&=e_{1}\wedge e_{2}\Big(\frac{1}{4}k^{-1}\partial_{2}(k)k^{-1}\partial_{2}(k)+\frac{1}{4}k^{-1}\partial_{1}(k)k^{-1}\partial_{1}(k)-\frac{1}{4}k^{-1}\partial_{2}(k)k^{-1}\partial_{2}(k)\\
 		&-\frac{1}{4}k^{-1}\partial_{2}(k)k^{-1}\partial_{2}(k)
 		 -\frac{1}{2}\partial_{1}\Big(k^{-1}\partial_{1}(k)\Big)-\frac{1}{2}\partial_{2}\Big(k^{-1}\partial_{2}(k)\Big)\Big)\\
 		&=e_{1}\wedge e_{2}\Big(\frac{1}{2}(k^{-1}{\rm Scal})\Big).
		\end{aligned}
 		\end{equation*}

 	 	By a similar computation, we see that \[\Omega_{21}=-e_{1}\wedge e_{2}\Big(\frac{1}{2}(k^{-1}{\rm Scal})\Big),\] which yields the first conclusion. The second conclusion, namely, (\ref{GBeasy}), follows from the definitions of the integrals on the two sides and the fact that $|k^{-1}|=k^{-1}$ for positive $k$.
 	\end{proof}
  \brmrk
 Thanks to the equation (\ref{GBeasy}), we can call $\Omega_{12}$ the Gauss-Bonnet $2$\nobreakdash-form corresponding to a conformally deformed metric. We shall denote the Gauss-Bonnet $2$\nobreakdash-form corresponding to a smooth deformation parameter $k$ by $\Omega_{12}^{k}$.
 \ermrk 

	
	
		
 \subsection{The Gauss-Bonnet theorem} 
 
Let us consider the canonical calculus of rank two on a $\textup{C}^*$\nobreakdash-algebra $A$ with a tracial state $\tau$. For the next definition recall the notations from the previous subsection. Let $k\in A$ be a smooth deformation parameter i.e. $k$ is an invertible element and $k, k^{-1}\in\cla$. We denote the Gauss-Bonnet $2$\nobreakdash-form corresponding to the metric $kg_{0}$ by $\Omega_{12}^{k}$.
\bdfn
Let $A$ be a $\textup{C}^*$\nobreakdash-algebra admitting the canonical calculus of rank two $(\Omega^{1}(\cla),d)$. Then the calculus of rank two is said to satisfy the Gauss-Bonnet theorem if $\int_{\tau}\Omega_{12}^{k}$ is independent of the smooth deformation parameter $k$.
\edfn 
\bppsn \label{13jul21sm1}
If the canonical calculus of rank two on a $\textup{C}^*$\nobreakdash-algebra $A$ with a tracial state $\tau$ satisfies the Gauss-Bonnet theorem, then $\int_{\tau}\Omega_{12}^{k}$ is $0$ for all smooth deformation parameters $k$.
\eppsn 
\begin{proof} Let $k_{1}, k_{2}$ be two smooth deformation parameters. Then we prove that
	\begin{displaymath}
	\int_{\tau}\Omega_{12}^{k_{1}k_{2}}= \int_{\tau}\Omega_{12}^{k_{1}}+\int_{\tau}\Omega_{12}^{k_{2}},
	\end{displaymath}
	which will force the constant $\int_{\tau}\Omega_{12}^{k}$ to be zero for all smooth deformation parameters $k$. By the definition and formula (\ref{scalarconformal1}) for the scalar curvature, we have:
	\[
	\int_{\tau}\Omega^{k_{1}k_{2}}_{12}=-\tau(\partial_{1}\Big(k_{2}^{-1}k_{1}^{-1}\partial_{1}(k_{1}k_{2})\Big)+\partial_{2}\Big(k_{2}^{-1}k_{1}^{-1}\partial_{2}(k_{1}k_{2})\Big)).
	\]
	Using the Leibniz rule of $\partial_{1}$ and the traciality of $\tau$, we get
	\begin{equation*}
		\begin{aligned}
			\tau(-\partial_{1}\Big(k_{2}^{-1}k_{1}^{-1}\partial_{1}(k_{1}k_{2})\Big))&=\tau(-\partial_{1}\Big(k_{2}^{-1}k_{1}^{-1}\partial_{1}(k_{1})k_{2}+k_{2}^{-1}\partial_{1}(k_{2})\Big))\\
			&=\tau(-\partial_{1}(k_{2}^{-1})k_{1}^{-1}\partial_{1}(k_{1})k_{2}-k_{2}^{-1}\partial_{1}\Big(k_{1}^{-1}\partial_{1}(k_{1})\Big) k_{2}\\
			&-k_{2}^{-1}k_{1}^{-1}\partial_{1}(k_{1})\partial_{1}(k_{2}))-\tau(\partial_{1}\Big( k^{-1}_{2}\partial_{1}(k_{2})\Big))\\
			&=-\tau\Big((k_{2}\partial_{1}(k_{2}^{-1})+\partial_{1}(k_{2})k_{2}^{-1})k_{1}^{-1}\partial_{1}(k_{1})\Big)\\
			&-\tau(\partial_{1}\Big(k_{1}^{-1}\partial_{1}(k_{1})\Big))-\tau(\partial_{1}\Big(k_{2}^{-1}\partial_{1}(k_{2})\Big))
		\end{aligned}	
	\end{equation*}
	But $(k_{2}\partial_{1}(k_{2}^{-1})+\partial_{1}(k_{2})k_{2}^{-1})=\partial_{1}(k_{2}k_{2}^{-1})=0$ and hence
	\begin{equation}
	\label{tau1}
	\tau(-\partial_{1}\Big(k_{2}^{-1}k_{1}^{-1}\partial_{1}(k_{1}k_{2})\Big))=-\tau(\partial_{1}\Big(k_{1}^{-1}\partial_{1}(k_{1})\Big))-\tau(\partial_{1}\Big(k_{2}^{-1}\partial_{1}(k_{2})\Big))
	\end{equation}
	With exactly similar computation, we have 
	\begin{equation}
	\label{tau2}
	\tau(-\partial_{2}\Big(k_{2}^{-1}k_{1}^{-1}\partial_{2}(k_{1}k_{2})\Big))=-\tau(\partial_{2}\Big(k_{1}^{-1}\partial_{2}(k_{1})\Big))-\tau(\partial_{2}\Big(k_{2}^{-1}\partial_{2}(k_{2})\Big))
	\end{equation}
	Combining \eqref{tau1} and \eqref{tau2}, we get that the expression \[\tau(-\partial_{1}\Big(k_{2}^{-1}k_{1}^{-1}\partial_{1}(k_{1}k_{2})\Big)-\partial_{2}\Big(k_{2}^{-1}k_{1}^{-1}\partial_{2}(k_{1}k_{2})\Big))\] equals
	\[
	\tau(-\partial_{1}\Big(k_{1}^{-1}\partial_{1}(k_{1})\Big)-\partial_{2}\Big(k_{1}^{-1}\partial_{2}(k_{1})\Big))+ \tau(-\partial_{1}\Big(k_{2}^{-1}\partial_{1}(k_{2})\Big)-\partial_{2}\Big(k_{2}^{-1}\partial_{2}(k_{2})\Big)),
	\]
	which is equal to $\int_{\tau}\Omega^{k_{1}}_{12}+\int_{\tau}\Omega^{k_{2}}_{12}$, completing the proof.  
	\end{proof}
For the proof of the next lemma see the proof of Proposition 4.1 of \cite{rosenberg}.
\blmma
Let $A$ be a $\textup{C}^*$\nobreakdash-algebra generated by two isometries with two separating derivations on $\cla$ so that it admits the canonical calculus of rank two. Moreover, suppose that $A$ has a tracial state $\tau$ which is invariant under the $\mathbb{T}^{2}$\nobreakdash-action. Then the calculus satisfies the Gauss-Bonnet theorem.
\elmma
The next corollary follows from the standard fact that the noncommutative $2$\nobreakdash-torus and the group $\textup{C}^*$\nobreakdash-algebra on the free group on two generators admit a unique $\mathbb{T}^{2}$\nobreakdash-invariant tracial state.
\bcrlre \label{13jul21sm2}
The canonical calculus of rank two on the noncommutative $2$\nobreakdash-torus and that on $\textup{C}^{\ast}_{r}(\mathbb{F}_{2})$ satisfy the Gauss-Bonnet theorem.
\ecrlre 
\brmrk
Although the Cuntz algebra with two generators does not admit a tracial state, it also satisfies the Gauss-Bonnet theorem with respect to the unique KMS state. This is because the KMS state is invariant under the toral action. The proof can be given following the same argument as in \cite[Proposition 4.1]{rosenberg}.
\ermrk 
\brmrks
\hfill
\begin{enumerate}
	\item As already mentioned in the Introduction, there is another approach to Gauss-Bonnet theorem on the noncommutative $2$\nobreakdash-torus adapted in \cite{tretkof,Sitarz,Khalkhali}. In the cited works, the geometric data is entirely encoded in the Dirac operator. More specifically, the scalar curvature is equal to $\zeta_{D}(0)$, where $\zeta_{D}(s)$ is the spectral zeta function admitting a meromorphic continuation to the whole complex plane.
	\item In \cite{Sitarz}, the Dirac operator is perturbed and shown that upto second order perturbation, the zeta function vanishes at $0$ and this statement is taken as the Gauss-Bonnet theorem.
	\item In \cite{Khalkhali}, the Dirac operator is conformally rescaled using a globally diagonalizable matrix $h$. Denoting the conformally rescaled Dirac operator by $D_{h}$, the Gauss-Bonnet theorem takes the form $\zeta_{D}(0)=\zeta_{D_h}(0)$, for all globally diagonalizable matrices $h$.
	\item In \cite{Sitarz,Khalkhali} and in our paper, the Gauss-Bonnet theorem for noncommutative 2-torus essentially takes the form ``the scalar curvature is independent of some conformal deformation parameter''. We directly deform the canonical metric in our paper, whereas in \cite{Sitarz,Khalkhali}, the metric is deformed indirectly via deforming the Dirac operator.
\end{enumerate}
\ermrks

\subsection{A class of strongly $\sigma$\nobreakdash-compatible metrics not amenable to the Gauss-Bonnet theorem} \label{13jul21sm3}


In this subsection we give an example of a class of strongly $\sigma$\nobreakdash-compatible metrics on the calculus of rank two on the noncommutative $2$\nobreakdash-torus for which the Gauss-Bonnet type theorem fails. Let us briefly mention what a Gauss-Bonnet theorem could be for a general class of strongly $\sigma$\nobreakdash- compatible metrics. Since we do not assume any positivity of ${\rm det}((g_{ij}))$, there is no surface element available in general. So instead of looking at $\int_{\tau}K dS$, one could look at $\int_{\tau}\Omega_{12}$. Hence a Gauss-Bonnet theorem would state that $\int_{\tau}\Omega_{12}$ is independent of some parameter. To that end we fix a smooth deformation parameter $k\in \textup{C}(\mathbb{T}^{2}_{\theta})$ in the sense of Remark \ref{wedge_extension}. Then the following is a strongly $\sigma$\nobreakdash-compatible metric which is a special case of Example \ref{new} for $n=2$:
	\begin{equation}\label{badmetric}
	g(\sum_{i,j=1}^{2}e_{i}\ot e_{j}a_{ij}):=ka_{11}+a_{22}.
	\end{equation}
	Plugging in $g_{11}=k,\ g_{22}=1, \ g_{12}=g_{21}=0$ in Equation \eqref{Christoffel}, we get
	 	\bppsn
	The Christoffel symbols of the Levi-Civita connection $\nabla_{g}$ are given by
	\begin{eqnarray*}
	&&\Gamma^{1}_{11}=\frac{1}{2}k^{-1}\partial_{1}(k), \Gamma^{1}_{12}=\Gamma^{1}_{21}=\frac{1}{2}k^{-1}\partial_{2}(k), \\
	&& \Gamma^{1}_{22}=0;
	\Gamma^{2}_{12}=\Gamma^{2}_{21}=\Gamma^{2}_{22}=0,  \Gamma^{2}_{11}=-\frac{1}{2}k^{-2}\partial_{2}(k).
	\end{eqnarray*}
\eppsn
Now we can deduce the expression for $\Omega_{12}^{k}$ from Equation (\ref{curvatureform}) as well as the formula for scalar curvature by plugging in the Christoffel symbols in Proposition 6.8 of \cite{conformal_bhowmick}. We state these in the form of a theorem.
\bthm
\label{express}
Let $k\in \textup{C}(\mathbb{T}^{2}_{\theta})$ be a smooth deformation parameter (in the sense of Remark \ref{wedge_extension}). Then the scalar curvature and the curvature $2$\nobreakdash-form of the Levi-Civita connection $\nabla_{g}$ ($g$ as in (\ref{badmetric})) are given by 

\[
\begin{aligned}
{\rm Scal}&=\frac{1}{8}\Big(k^{-1}\partial_{2}(k)k^{-1}\partial_{2}(k)-\partial_{2}(k)k^{-2}\partial_{2}(k)\Big)-\frac{1}{2}\Big(k\partial_{2}(k^{-2}\partial_{2}(k))+\partial_{2}(k^{-1}\partial_{2}(k))\Big)\\
&-\frac{1}{4}\partial_{1}(k^{-1}\partial_{2}(k))
\end{aligned}
\]
and
\[\Omega_{12}^{k}=e_{1}\wedge e_{2}\Big(\frac{1}{4}k^{-1}\partial_{2}(k)k^{-1}\partial_{2}(k)-\frac{1}{2}\partial_{2}\big(k^{-1}\partial_{2}(k)\big)\Big),
\] respectively.
\ethm 

The following proposition justifiably says that the Gauss-Bonnet type theorem fails for the class of the strongly $\sigma$\nobreakdash-compatible metrics considered in this subsection. In the following proposition $U_{1}, U_{2}$ are the generating unitaries for the noncommutative $2$\nobreakdash-torus. The claims of the following proposition follow from simple calculations which we leave to the reader. 
\bppsn \label{gbfails}
For the strongly $\sigma$\nobreakdash-compatible metrics $g$ with parameters $k_{1}=U_{1}$ and $k_{2}=U_{2}$, we have
\[
\int_{\tau}\Omega_{12}^{k_{1}}= 0, 
\quad \int_{\tau}\Omega_{12}^{k_{2}}=\frac{1}{4}, \text{ respectively.}
\]
\eppsn

\section{Concluding remarks} We end this article with some remarks and some future directions.

\subsection*{An example of a graph $\textup{C}^*$\nobreakdash-algebra} Let $A$ be the universal $\textup{C}^*$\nobreakdash-algebra generated by partial isometries $S_{1},S_{2}$ such that $S_{1}^{\ast}S_{1}=S_{2}S_{2}^{\ast}$ and $S_{1}S_{1}^{\ast}=S_{2}^{\ast}S_{2}$. It can be shown that $S_{1}^{\ast}S_{1}+S_{2}^{\ast}S_{2}=1$. Then $A$ admits a $\mathbb{T}^{2}$\nobreakdash-action canonically and by Lemma 3.2, there exist two separating derivations on $\cla$ such that $\partial_{i}(S_{j})=\delta_{ij}S_{i}$ and thus a Dirac triple as in the proof of Proposition 3.1.

	It can be shown following the arguments in the proof of Proposition 3.1 that $\Omega_{\cld}^{1}(\cla)\subset\cla\oplus\cla$ . For the equality, we observe that $S_{1}^{\ast}[\cld,S_{1}]=S_{1}^{\ast}S_{1}\ot\gamma_{1}$ and $S_{1}[\cld,S_{1}^{\ast}]=-S_{1}S_{1}^{\ast}$ so that $S_{1}^{\ast}[\cld,S_{1}]-S_{1}[\cld,S_{1}^{\ast}]=(S_{1}S_{1}^{\ast}+S_{1}^{\ast}S_{1})\ot\gamma_{1}=1\ot\gamma_{1}\in\Omega^{1}_{\cld}(\cla)$. Similarly, $1\ot\gamma_{2}\in\Omega^{1}_{\cld}(\cla)$. The rest of the calculations are exactly similar to case of the noncommutative torus and can be executed with minor modifications. Therefore the scalar curvature with respect to the canonical bilinear metric is again $0$.

	\blmma
	The Christoffel symbols, hence the scalar curvature, are zero for any conformally deformed metric, the smooth deformation parameter being any invertible element of the invariant subalgebra $\cla^{\mathbb{T}^2}$.
	\elmma

	\begin{proof}
	Let $k$ in $\cla^{\mathbb{T}^2}$ be an invertible element. Invariance with respect to the $\mathbb{T}^2$ implies that $\partial_{i}(k)=0$ for $i=1,2$. By the Leibniz rule. it follows that $\partial_{i}(k^{-1})=0$ too. The lemma now follows from the formulae of the Christoffel symbols and the scalar curvature for a conformally deformed metric, as in Proposition 3.1.	
	\end{proof}

     This example provides a slight enlargement of our class of examples. But for a general graph $\textup{C}^*$\nobreakdash-algebra our technique fails to produce a free bimodule of one-forms and a clear need of a computational device is indicated where the calculus is not free.
	
	\subsection*{Extension of the Gauss-Bonnet theorem} We have seen that if the canonical calculus of rank two on a $\textup{C}^*$\nobreakdash-algebra with a tracial state $\tau$ satisfies the Gauss-Bonnet theorem, the quantity $\int_{\tau}\Omega_{12}^{k}$ is equal to $0$ for all smooth deformation parameters $k$. It would be interesting to look for $\textup{C}^*$\nobreakdash-algebras necessarily with a {\bf non-tracial} state $\tau$ such that the quantity $\int_{\tau}\Omega_{12}^{k}$ is a non-zero constant for all smooth deformation parameters. Unfortunately, the Cuntz algebra with two generators cannot produce such an example. One can also change the differential calculus instead to produce such a non-zero scalar. But this again requires producing examples of tame calculus which are not free.



\end{document}